\newtheorem{theorem}{Theorem}
\newtheorem{corollary}[theorem]{Corollary}
\newtheorem{example}[theorem]{Example}
\newtheorem{lemma}[theorem]{Lemma}
\begin{document}

\title{Selection of calibrated subaction when temperature goes to zero in the discounted problem}

\author{Renato Iturriaga, Artur O. Lopes and Jairo K. Mengue}

\date{\today}
\maketitle

\abstract{

Consider $T(x)= d \,  x$ (mod 1) acting on $S^1$, a Lipschitz potential $A:S^1 \to \mathbb{R}$, $0<\lambda<1$
and the unique function
$b_\lambda:S^1 \to \mathbb{R}$ satisfying $
 b_\lambda(x) = \max_{T(y)=x} \{ \lambda \, b_\lambda(y) + A(y)\}.
$


We will show that, when $\lambda \to 1$, the function $b_\lambda- \frac{m(A)}{1-\lambda}$ converges uniformly to the calibrated subaction $V(x) = \max_{\mu \in   \mathcal{ M}}     \int S(y,x) \, d \mu(y)$, where $S$ is the Ma\~ne potential, $\mathcal{ M}$ is the set of invariant probabilities with support on the Aubry set and $m(A)= \sup_{\mu \in \mathcal{M}} \int A\,d\mu$.

For $\beta>0$  and $\lambda \in (0,1)$, there exists a unique fixed point $u_{\lambda,\beta} :S^1\to \mathbb{R}$ for the equation $e^{u_{\lambda,\beta}(x)} = \sum_{T(y)=x}e^{\beta A(y) +\lambda u_{\lambda,\beta}(y)}$.
It is known that as $\lambda \to 1$ the family  $e^{[u_{\lambda,\beta}- \sup u_{\lambda,\beta}]}$ converges uniformly to the main eigenfuntion $\phi_\beta $ for the Ruelle operator associated to $\beta A$.
We consider $\lambda=\lambda(\beta)$, $\beta(1-\lambda(\beta))\to+\infty$ and $\lambda(\beta) \to 1$, as $\beta \to\infty$. Under these hypotheses    we will show that   $\frac{1}{\beta}(u_{\lambda,\beta}-\frac{P(\beta A)}{1-\lambda})$ converges uniformly to the above $V$, as $\beta\to \infty$. The parameter
$\beta$ represents  the inverse of temperature in Statistical Mechanics and $\beta \to \infty$ means  that we are considering that the temperature goes to zero. Under these conditions we get selection of subaction when $\beta \to \infty$.

}

\section{Introduction}

Consider  $T(x)= d \,  x$ (mod 1) acting on $S^1$ and a Lipschitz potential $A:S^1 \to \mathbb{R}$.
We denote by
\begin{equation}\label{eq1}
m(A)= \sup \left\{\int A \,d \mu, \,\text{where}\, \mu\, \text{is invariant for}\, T \right\}.
\end{equation}
Any invariant $\mu$ which attains this supremum is called an {\bf $A$-maximizing probability}.

A {\bf subaction} for $A$ is a continuous function $D: S^1 \to \mathbb{R} $  such that for all $x\in S^1$,
$$D(T(x)) \geq A(x) + D(x) - m(A).$$
It is called a {\bf calibrated subaction} if for all $y\in S^1$, $$ D(y) = \max_{T(x)=y} \{ \,  A(x)+D(x) - m(A)\}.$$

We refer the reader to \cite{BLL, Ga, LoT, PP}
for general results on Ergodic Optimization and Thermodynamic Formalism.

Maximizing probabilities and calibrated subactions are dual objects in Ergodic Optimization. On the one hand $m(A)$ satisfies (\ref{eq1}), but on the other hand
\begin{align*}
m(A)&= \inf_{H \, \text{continuous}} \left( \,\sup_{x\in S^1} (A(x)+H(x)-H(T(x)) )\,\right)\\
&= \sup_{x\in S^1} (A(x)+D(x)-D(T(x))
\end{align*}
for any calibrated subaction $D$. Furthermore, it is known that a calibrated subaction can help to identify the support of the maximizing probabilities for $A$ (see \cite{CLT} or \cite{BLL}).

 A natural problem is: how to find subactions?  Note that we need to have at hand the exact value $m(A)$ in order to verify if a specific candidate $D$ is indeed a calibrated subaction.
The discounted method, which is described below, can be quite useful in order to get a good approximation (via iteration of a contraction) of a calibrated subaction without the knowledge of the value $m(A)$.

For each fixed  $\lambda \in (0,1)$, consider the function
$b=b_\lambda:S^1 \to \mathbb{R}$ satisfying the equation
\begin{equation}
b(x) = \max_{T(y)=x} \{ \lambda \, b(y) + A(y)\}. \label{blambda}
\end{equation}
This function is unique and we call  $b_\lambda$ the {\bf$\lambda$-calibrated subaction} for $A$ (see for instance Theorem 1 in \cite{Bou} or  \cite{LO}).

The solution $b_\lambda$ can be obtained in the following way: consider $\tau_j, \,j=1,...,d$  the inverse branches of $T$.
For $\lambda<1$, consider
$$ S_{\lambda,A} (x,a) = \sum_{k=0}^\infty \lambda^k A(\,(\tau_{a_k}\circ \tau_{a_{k-1}}\circ\,...\, \circ\tau_{a_0})\, (x)  \,) ,$$
where $a\in \{1,2,...,d\}^\mathbb{N}$ (see expression (6) in \cite{LO} or \cite{T1}). Then,  the solution of (\ref{blambda}) is given by
$b_\lambda(x)=\sup_{a \in \{1,2,...,d\}^\mathbb{N}}S_\lambda (x,a)$ (see section 3 in \cite{LO}.). For fixed $x$, as the function $S_{\lambda,A} (x,.)$ is continuous in the second coordinate, there exists some $a$ {\bf realizing the supremum}, which is  called a $b_\lambda (x)$ {\bf realizer}. Denoting $a=a_{0}a_{1}...$ we have that for any $k$:
\begin{equation}\label{eq3}
  b_{\lambda}(\tau_{a_k}\circ...\circ \tau_{a_0}x) = \lambda b_{\lambda}(\tau_{a_{k+1}}\circ...\circ \tau_{a_0}x) + A(\tau_{a_{k+1}}\circ...\circ \tau_{a_0}x)
\end{equation}
and $$
b_{\lambda}(x) = \lambda^{k+1}b_{\lambda}(\tau_{a_{k}} \circ...\circ \tau_{a_0}\,x) +A(\tau_{a_0}\,x)+...+\lambda^{k}A(\tau_{a_{k}} \circ...\circ \tau_{a_0}\,x),
$$
while for a general $a \in \{1,2,...,d\}^\mathbb{N}$ we have that
$$
b_{\lambda}(x) \geq \lambda^{k+1}b_{\lambda}(\tau_{a_{k}} \circ...\circ \tau_{a_0}\,x) +A(\tau_{a_0}\,x)+...+\lambda^{k}A(\tau_{a_{k}} \circ...\circ \tau_{a_0}\,x).
$$

The above family $b_\lambda$, $0<\lambda<1$, is equicontinuous. It is known that any convergent subsequence $b_{\lambda_n}- \sup b_{\lambda_n} $, $\lambda_n\to 1$,
determines in the limit a calibrated subaction for $A$ (see Theorem 1 in \cite{Bou}, \cite{BKRU} or \cite{T1}). This procedure, known as the discounted method, is borrowed from
Control Theory. The function $b_{\lambda}$ is obtained as a fixed point of a contraction (see \cite{LO}), which is, of course, computationally convenient (note that  $m(A)$ does not appear on expression (\ref{blambda})).

\medskip

In this work the constant $\sup b_{\lambda}$ will be replaced by $\frac{m(A)}{1-\lambda}$. Our first aim is to describe a certain calibrated subaction as the limit of $b_{\lambda}- \frac{m(A)}{1-\lambda}$, as $\lambda\to 1$. In this way the limit will not depend of the convergent subsequence. Later we will consider eigenfunctions of the Ruelle operator and selection of subaction via the limit when temperature goes to zero (see \cite{CLT} and \cite{BLL}). 

\medskip
A point $x$  in $S^1$ is called {\bf $A$-non-wandering}, if for any $\epsilon>0$, there exists $z\in S^1$ and $n\geq 1$,  such that,
 $d(z,x)<\epsilon, T^n (z)=x$, and $|\sum_{j=0}^{n-1} (A - m(A))\, (T^j (z)) |<\epsilon$.
 The {\bf Aubry set} for $A$ is the set of $A$-non-wandering points in $S^1$ and is denoted by $\Omega(A)$. Any invariant probability with  support inside the Aubry set is $A$-maximizing (see \cite{CLT} or section 3 in \cite{CLO}). We denote by $\mathcal{M}$ the set of $A$-maximizing probabilities.

Consider the function  $S$ given by Definition 22 in \cite{CLT} (see also \cite{GL2} and \cite{Ga}):
$$S(y,x)=  \lim_{\epsilon \to 0}
\sup\{S_n(A - m(A)) (y')\,|\, n\geq 1,\,d(y',y)\leq \epsilon,\ \, T^n (y')=x\,\},
$$
where
\begin{equation} \label{forg}S_n(A - m(A)) (y')=\sum_{j=0}^{n-1}(A-m(A))(T^j(y')).
\end{equation}
This function is called the {\bf Ma\~ne potential}.
\medskip

{\bf Remark 1:} For $y\in \Omega(A)$ fixed, the function $S(y,.)$ is a calibrated subaction (see \cite{CLT}, Proposition 5.2 in \cite{Ga} or \cite{GL2}) with the same Lipschitz constant as $A$. As $S(y,y)=0$ for $y \in \Omega(A)$, we get that for fixed $y \in \Omega(A)$, the function $x \to S(y,x)$ is bounded.

\medskip

The function $S(x,y)$ is not continuous as a function of $(x,y)$ (see Proposition 3.5 in \cite{CLT1})

We define, for each $x \in S^1$, the  subaction
$$V(x) = \max_{\mu \in   \mathcal{ M}}     \int S(y,x) \, d \mu(y)   .$$

Our first goal is to prove the following theorem.

\begin{theorem}\label{goal} If $A$ is Lipchitz, when $\lambda \to 1$, the function $U_{\lambda}:=\left(b_{\lambda}-\frac{m(A)}{1-\lambda}\right)$ converges uniformly to $V$.
\end{theorem}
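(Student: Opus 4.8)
The plan is to first establish compactness: since the family $b_\lambda$ is equicontinuous and $U_\lambda = b_\lambda - \frac{m(A)}{1-\lambda}$ differs from $b_\lambda$ by a constant, the family $U_\lambda$ is equicontinuous; to run Arzel\`a--Ascoli I must also show it is uniformly bounded. For the upper bound, pick any $A$-maximizing $\mu$ with support in $\Omega(A)$ and any $y$ in that support; iterating \eqref{blambda} along a realizer and comparing with orbit segments that shadow points near $y$ (using the defining inequality $b_\lambda(x)\ge \lambda^{k+1} b_\lambda(\cdots) + A(\cdots)+\dots+\lambda^k A(\cdots)$ in reverse, i.e.\ starting from near-periodic orbits returning close to $y$), one gets $U_\lambda(x)\ge$ something converging to $S(y,x)$, hence $\liminf U_\lambda(x)\ge V(x)$; a matching argument using the realizer for $b_\lambda(x)$ together with the ergodic/subadditive estimate $\frac{1}{k}S_k(A)\to$ (a value $\le m(A)$) along the realizer orbit gives $\limsup U_\lambda(x)\le$ the same expression. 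Thus both the bound and the identification of the limit come from the same two inequalities.

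In more detail, the lower bound $\liminf_{\lambda\to1} U_\lambda(x)\ge \int S(y,x)\,d\mu(y)$ for each $\mu\in\mathcal M$: by ergodic decomposition it suffices to treat ergodic $\mu$, and then by the ergodic theorem there is a $\mu$-generic $y$ with $\frac1n S_n(A-m(A))(y)\to 0$; given $x$, the definition of the Ma\~ne potential produces, for each small $\epsilon$, an integer $n$ and a point $y'$ with $d(y',y)\le\epsilon$, $T^n(y')=x$, and $S_n(A-m(A))(y')$ close to $S(y,x)$. Feeding the branch of $T^{-n}$ through $y'$ into the super-solution inequality for $b_\lambda$ gives
\[
b_\lambda(x)\ \ge\ \lambda^{n} b_\lambda(y') + \sum_{j=0}^{n-1}\lambda^{?}A(T^{j}(y')),
\]
and since $\lambda\to1$ with $n$ fixed, $\lambda^n\to1$ and $\sum_{j}\lambda^{?}A(T^jy')\to S_n(A)(y')$; subtracting $\frac{m(A)}{1-\lambda}$, writing $\frac{1-\lambda^n}{1-\lambda}\to n$, and using equicontinuity to replace $b_\lambda(y')$ by $b_\lambda(y)$ up to $O(\epsilon)$, one obtains $\liminf U_\lambda(x)\ge S_n(A-m(A))(y') - (\text{error})\ge S(y,x)-O(\epsilon)$. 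Taking $\epsilon\to0$ and then the sup over $\mu$ yields $\liminf U_\lambda\ge V$.

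For the upper bound, fix $x$ and take a $b_\lambda(x)$-realizer $a$, with associated backward orbit $x_k := \tau_{a_k}\circ\cdots\circ\tau_{a_0}(x)$; from \eqref{eq3} one has the exact telescoping identity for $b_\lambda(x)$. Any accumulation point $z$ of $\{x_k\}$ (or rather of the empirical measures of the corresponding forward orbit segments) is supported on a compact invariant set on which the ergodic averages of $A$ are $\le m(A)$; the standard discounted-method estimate then shows $\limsup_{\lambda\to1}(b_\lambda(x)-\frac{m(A)}{1-\lambda})$ is bounded above by a quantity of the form $\sup\{\,\limsup_k [\,S_k(A-m(A))(\text{a tail orbit}) + (\text{boundary term from near }\Omega(A))\,]\}$, and one argues that this boundary term is exactly realized by some $\mu\in\mathcal M$ and the Ma\~ne potential $S(\mu,x)$, giving $\limsup U_\lambda(x)\le V(x)$. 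The two inequalities combined show that every subsequential uniform limit of $U_\lambda$ equals $V$, and by Arzel\`a--Ascoli the convergence is uniform.

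The main obstacle I expect is the upper bound: controlling the \emph{tail} of the realizer orbit and showing its limiting behavior does not overshoot $V$. The realizer orbit need not converge, need not spend its time near a single ergodic component, and the nonhomogeneous weights $\lambda^k$ interact with the long-time behavior; the delicate point is to show that the portion of the orbit that stays away from $\Omega(A)$ contributes a strictly non-positive amount in the limit (this is where $A$-non-wandering really enters), so that the limit is pinned down by the part of the orbit that accumulates on the Aubry set, and there by Remark 1 the contribution is at most $\max_{y\in\Omega(A)} S(y,x)$, which equals $V(x)$ since the relevant $y$ can be taken generic for a maximizing measure. Making ``stays away from $\Omega(A)$ $\Rightarrow$ negative Birkhoff deficit, uniformly'' precise — i.e.\ a quantitative version of the definition of the Aubry set — is the crux; everything else is equicontinuity bookkeeping and passing limits with $\lambda^n\to1$.
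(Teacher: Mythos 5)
Your overall skeleton (equicontinuity plus uniform boundedness, then a two-sided comparison identifying every subsequential limit with $V$) matches the paper, but both halves of the comparison have genuine gaps, and the lower bound as written actually proves a false statement. In the lower-bound step you feed the branch through $y'$ into $b_\lambda(x)\ge\lambda^n b_\lambda(y')+\sum_j\lambda^{n-1-j}A(T^jy')$ and then drop the boundary term, concluding $\liminf U_\lambda(x)\ge S(y,x)$ for \emph{every} $y$ in the support of a maximizing measure. That cannot be right: it would force the limit to dominate $\sup_{y\in\Omega(A)}S(y,\cdot)$, which is in general strictly larger than $V(x)=\max_\mu\int S(y,x)\,d\mu(y)$ (take a maximizing periodic orbit on which $A$ is non-constant: then $y\mapsto\int S(z,y)\,d\mu(z)$ averages to zero over the orbit, so $V(y)<0=S(y,y)$ at some $y$ on the orbit, contradicting your pointwise bound at $x=y$). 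The boundary term $\lambda^nU_\lambda(y')$ must be kept, and the inequality $U(x)\ge U(y)+S(y,x)$ can only be used \emph{after integrating in $y$ against $\mu$}, together with the normalization $\int U_\lambda\,d\mu\ge0$ for every $\mu\in\mathcal M$ (which follows by integrating $b_\lambda\circ T-\lambda b_\lambda-A\ge0$ against an invariant measure). That normalization, plus the representation $w(x)=\sup_{y\in\Omega(A)}\{w(y)+S(y,x)\}$ for calibrated subactions and the fact that $\int\!\!\int S(x,y)\,d\mu\,d\mu=0$ for ergodic maximizing $\mu$ (Atkinson/Ma\~n\'e, needed to get $\int V\,d\mu\ge0$), are the ingredients your sketch is missing; none of them is ``equicontinuity bookkeeping.''

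For the upper bound you explicitly leave the crux (the tail of the realizer orbit) unresolved, and indeed the quantitative ``away from $\Omega(A)$ implies uniform Birkhoff deficit'' statement you ask for is not how one should proceed. The paper sidesteps the tail analysis entirely by a duality through the discounted occupation measure $\mu_\lambda^y=(1-\lambda)\sum_n\lambda^n\delta_{(\tau_{a_n}\circ\cdots\circ\tau_{a_0})(y)}$ of a realizer: one has $b_\lambda(y)=\frac1{1-\lambda}\int A\,d\mu_\lambda^y$ and $\int(w\circ T-w)\,d\mu_\lambda^y=(1-\lambda)(w(y)-\int w\,d\mu_\lambda^y)$, so for \emph{any} subaction $w$, $U_\lambda(y)\le w(y)-\int w\,d\mu_\lambda^y$. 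Applying this with $w=V$, noting that every weak-$*$ limit of $\mu_\lambda^y$ is a maximizing measure (because $(1-\lambda)\inf b_\lambda\to m(A)$), and using $\int V\,d\mu\ge0$ gives $U\le V$ with no control whatsoever on where the realizer orbit accumulates. Until you either supply this duality inequality or make your tail estimate precise, the proof is incomplete.
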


The above theorem assures that the  subaction $V$ is very special among the possible ones. We will prove that $V$ is calibrated (see corollary \ref{Vcalibrated}).

\bigskip

This first part of the paper follows the ideas in \cite{DFIZ} and \cite{DFIZ1} and  obtains analogous results  in the framework of ergodic optimization.

\medskip

From now on we describe the second part of our paper which considers a limit when  temperature goes to zero - this result has a quite  different nature when compared with  \cite{DFIZ} and \cite{DFIZ1}.

It is known (see for instance section 7 in \cite{Bou}, \cite{LoT} or \cite{LMMS}) that, for fixed $\beta>0$ and $\lambda \in (0,1)$, there exists a unique fixed point $u_{\lambda,\beta} :S^1\to \mathbb{R}$ for the equation
\[e^{u_{\lambda,\beta}(x)} = \sum_{T(y)=x}e^{\beta A(y) +\lambda u_{\lambda,\beta}(y)}.\]
For fixed $\beta$, the family $u_{\lambda,\beta}$ is equicontinuous in $\lambda \in(0,1)$.
Moreover, $e^{u_{\lambda,\beta}- \sup u_{\lambda,\beta}}$  converges as $\lambda \to 1$. The limit function  $\phi_\beta $ is an eigenfunction associated to the main eigenvalue (spectral radius) of the Ruelle operator $L_{\beta A}$ associated to the potential $\beta A$ (see Lemma \ref{caselambda}).

The parameter
$\beta$ represents  the inverse of temperature in Statistical Mechanics

From \cite{CLT}  we get that $\frac{1}{\beta} \log \phi_\beta$ is equicontinuous and any limit of a convergent subsequence of
$\frac{1}{\beta_n} \log \phi_{\beta_n}$, $\beta_n \to \infty$, is a calibrated subaction. The  limit when $\beta \to \infty$ is known as the limit when  temperature goes to zero (see \cite{BLL}).  For the case of locally constant potentials a precise description is \cite{Bre}. The most comprehensive result of this kind of convergence for Lipschitz potentials is Theorem 16 in \cite{GT}.

In the {\bf standard} terminology we say that there exists selection of subaction at zero temperature if the limit of the function $\frac{1}{\beta} \log \phi_\beta$ exists, when $\beta \to \infty$ (see \cite{BLL} for general results).

For a fixed value $\beta$, the function $u_{\lambda,\beta}$ is obtained as a fixed point of a contraction. In this way, one can get a computable method (via iteration of the contraction) for getting good approximations of the main eigenfunction $\phi_\beta$ (taking $\lambda$ close to $1$).

We denote by $\alpha(\beta)$ the main eigenvalue which is associated to $\phi_\beta$. The pressure $P(\beta A)=\log (\alpha(\beta))$ is equal to
$$ \sup_{\mu\,\,\text{invariant for} \,\, T}\,\{ \int h(\mu)+ \beta\, \int A d\, \mu\},$$
where $h(\mu)$ is the Kolmogorov entropy of $\mu$ (see \cite{PP}).


Our second goal is to show:
\begin{theorem}  \label{goal2}
	 Suppose $A$ is Lipchitz and there exists a relation between $\beta$ and $\lambda$ so that $\beta(1-\lambda(\beta))\to+\infty$ and $\lambda(\beta) \to 1$, as $\beta \to\infty$. Then, when $\beta\to \infty$, $\frac{1}{\beta}(u_{\lambda,\beta}-\frac{P(\beta A)}{1-\lambda})$ converges uniformly to $V$.
\end{theorem}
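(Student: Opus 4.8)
The plan is to reduce this second theorem to the first one (Theorem \ref{goal}), by controlling the difference between the zero-temperature discounted object $\frac{1}{\beta}u_{\lambda,\beta}$ and the discounted subaction $b_\lambda$. First I would establish a two-sided comparison between $u_{\lambda,\beta}$ and $\beta b_\lambda$. The fixed-point equation $e^{u_{\lambda,\beta}(x)} = \sum_{T(y)=x} e^{\beta A(y) + \lambda u_{\lambda,\beta}(y)}$ contains a sum of $d$ terms, so the standard $\max$-versus-$\sum$ sandwich gives
\[
\max_{T(y)=x}\{\beta A(y) + \lambda u_{\lambda,\beta}(y)\} \;\le\; u_{\lambda,\beta}(x) \;\le\; \log d \;+\; \max_{T(y)=x}\{\beta A(y) + \lambda u_{\lambda,\beta}(y)\}.
\]
Dividing by $\beta$, the function $w_{\lambda,\beta} := \frac{1}{\beta} u_{\lambda,\beta}$ is an \emph{approximate} fixed point of the same Bellman operator that defines $b_\lambda$, with an error of at most $\frac{\log d}{\beta}$ per application. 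Since that operator is a contraction in the sup-norm with factor $\lambda$ (this is exactly the mechanism used in \cite{LO} to produce $b_\lambda$), iterating the comparison and summing the geometric series yields
\[
\|w_{\lambda,\beta} - b_\lambda\|_\infty \;\le\; \frac{\log d}{\beta(1-\lambda)}.
\]
This is the key estimate, and it is precisely here that the hypothesis $\beta(1-\lambda(\beta)) \to +\infty$ enters: it forces the right-hand side to $0$ as $\beta \to \infty$.

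Next I would identify the two normalizing constants. On one hand $\sup b_\lambda \to$ the constant part controlled by $m(A)$; more precisely, from the proof of Theorem \ref{goal} one knows $\frac{m(A)}{1-\lambda}$ is the correct centering for $b_\lambda$. On the other hand one needs $\frac{P(\beta A)}{1-\lambda}$ to be the correct centering for $u_{\lambda,\beta}$, and the bridge between the two is the classical zero-temperature limit $\frac{1}{\beta}P(\beta A) \to m(A)$ as $\beta \to \infty$ (the entropy term $h(\mu)$ is bounded by $\log d$, so $\left|\frac{1}{\beta}P(\beta A) - m(A)\right| \le \frac{\log d}{\beta}$). Therefore
\[
\left\| \frac{1}{\beta}\Big(u_{\lambda,\beta} - \tfrac{P(\beta A)}{1-\lambda}\Big) \;-\; \Big(b_\lambda - \tfrac{m(A)}{1-\lambda}\Big) \right\|_\infty
\;\le\; \|w_{\lambda,\beta} - b_\lambda\|_\infty + \frac{1}{1-\lambda}\left|\tfrac{1}{\beta}P(\beta A) - m(A)\right|
\;\le\; \frac{2\log d}{\beta(1-\lambda)},
\]
which again tends to $0$ under the standing hypothesis. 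Combining this with Theorem \ref{goal}, which says $b_\lambda - \frac{m(A)}{1-\lambda} \to V$ uniformly as $\lambda \to 1$ (and we are told $\lambda(\beta) \to 1$), a triangle-inequality argument gives $\frac{1}{\beta}\big(u_{\lambda,\beta} - \frac{P(\beta A)}{1-\lambda}\big) \to V$ uniformly as $\beta \to \infty$.

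The main obstacle I anticipate is making the contraction-plus-error iteration fully rigorous at the level of the \emph{centered} functions rather than the raw ones: the Bellman operator $b \mapsto [x \mapsto \max_{T(y)=x}\{\lambda b(y)+A(y)\}]$ is a genuine $\lambda$-contraction on $C(S^1)$, but one must be careful that the additive $\frac{\log d}{\beta}$ error does not interact badly with the normalization subtracted off, and that the estimate is uniform in $\lambda$ (it is, because the contraction constant $\lambda<1$ only helps, and the geometric sum $\sum_k \lambda^k = \frac{1}{1-\lambda}$ is exactly the factor we have budgeted for). A secondary point requiring care is that Theorem \ref{goal} gives convergence as $\lambda \to 1$ along \emph{all} of $(0,1)$, so its use with the specific path $\lambda = \lambda(\beta)$ is legitimate; one just needs that $\lambda(\beta)\to 1$, which is assumed. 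No compactness or subsequence extraction is needed since both limits are already uniform, so the argument is genuinely a quantitative $\varepsilon$-estimate rather than a selection argument.
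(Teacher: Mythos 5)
Your proposal is correct and follows essentially the same route as the paper: the key estimate $b_\lambda \le \frac{1}{\beta}u_{\lambda,\beta} \le b_\lambda + \frac{\log d}{\beta(1-\lambda)}$ (which the paper derives by unrolling the fixed-point equation, equivalent to your contraction-plus-error iteration), followed by controlling $\frac{P(\beta A)}{\beta(1-\lambda)}-\frac{m(A)}{1-\lambda}$ and invoking Theorem \ref{goal}. The only difference is cosmetic: you bound $0\le P(\beta A)-\beta m(A)\le \log d$ elementarily via the entropy bound, whereas the paper cites Conze--Guivarch and Proposition 29 of \cite{CLT}; both suffice since $\beta(1-\lambda)\to\infty$.
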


The above result requires a control of the velocity such that $(1-\lambda(\beta))$ goes to zero, vis-\`a-vis, the growth of $\beta$ to infinity. We will show on the end of the paper a counterexample  proving that $\frac{1}{\beta} \log \phi_{\beta}$ does not always converge to $V$. This shows  that is really necessary some kind of joint control of the parameters $\beta$ and $\lambda$ (as considered in the above theorem).

The last theorem shows the existence of  selection of subaction when temperature goes to zero (but in a sense which is not the standard form).

In the next section we will prove  Theorem \ref{goal} and in the last section we will prove  Theorem \ref{goal2}.

The results that we get here are also true if one considers $T(x)=\sigma(x)$ acting on Bernoulli space $\{1,...,d\}^{\mathbb{N}}$.

\section{The limit of the $\lambda$-calibrated subaction, when $\lambda \to 1$.}

We start with the following result:

\begin{lemma}\label{lemmai} $V$ is a  subaction for $A$  and also satisfies: \newline
1. $\int V d \mu \geq 0$, for any $\mu \in  \mathcal{ M}.$ \newline
2. If $w$ is  a calibrated subaction, such that, $\int w \,d \mu\,\geq 0$, for any $\mu \in \mathcal{ M}$, then $w\geq V.$ 
%

\end{lemma}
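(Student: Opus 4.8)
The plan is to establish the three assertions in order, using the key facts recorded before the statement: that for each fixed $y\in\Omega(A)$ the function $S(y,\cdot)$ is a calibrated subaction with the same Lipschitz constant as $A$ (Remark 1), that $S(y,y)=0$ for $y\in\Omega(A)$, and that every $\mu\in\mathcal{M}$ is supported on $\Omega(A)$. First I would show $V$ is a subaction. Fix $x\in S^1$ and $\mu\in\mathcal{M}$. Since $S(y,\cdot)$ is a subaction for $\mu$-a.e.\ $y$, we have $S(y,T(x))\geq A(x)+S(y,x)-m(A)$ pointwise in $y$ on $\mathrm{supp}(\mu)$; integrating in $y$ against $\mu$ gives $\int S(y,T(x))\,d\mu(y)\geq A(x)-m(A)+\int S(y,x)\,d\mu(y)$. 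The left side is at most $V(T(x))$ by definition of $V$, so taking the supremum over $\mu$ on the right (the inequality holds for the maximizing $\mu$) yields $V(T(x))\geq A(x)+V(x)-m(A)$. Continuity of $V$ follows since the family $\{S(y,\cdot):y\in\Omega(A)\}$ is uniformly Lipschitz (common constant), hence $V$, being a sup of uniformly Lipschitz functions that is finite by Remark 1, is Lipschitz.

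For item 1, fix $\mu\in\mathcal{M}$ and evaluate $\int V\,d\mu=\int V(x)\,d\mu(x)$. For $\mu$-a.e.\ $x$ we have $x\in\Omega(A)$, and then choosing the diagonal term $y=x$ in the definition of $V$ (legitimate because $x\in\Omega(A)$ and $\mu\in\mathcal{M}$ is an admissible competitor concentrated near $x$, or more cleanly using $V(x)\geq\int S(y,x)\,d\nu(y)$ for any $\nu\in\mathcal M$ and then a suitable choice) gives $V(x)\geq S(x,x)=0$. Hence $\int V\,d\mu\geq0$. The cleanest route here may be: since $V$ is a subaction, $V(T^j x)-V(T^{j-1}x)\geq (A-m(A))(T^{j-1}x)$; summing over a closed orbit (or integrating the subaction inequality against the invariant $\mu$) forces $\int (V\circ T - V)\,d\mu = 0$ because $\mu$ is $A$-maximizing, and then one extracts nonnegativity of $\int V\,d\mu$ from the fact that $S(x,\cdot)$ calibrated and $S(x,x)=0$ give $V\geq0$ on $\Omega(A)$, which carries the integral.

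For item 2, let $w$ be a calibrated subaction with $\int w\,d\mu\geq0$ for all $\mu\in\mathcal{M}$. The key is the elementary ``telescoping'' property of calibrated subactions versus the Ma\~ne potential: for any $y\in\Omega(A)$, any $x$, any $n\geq1$ and any $y'$ near $y$ with $T^n(y')=x$, iterating the calibration inequality for $w$ along the orbit $y',T(y'),\dots,T^{n}(y')=x$ gives $w(x)\geq w(y')+S_n(A-m(A))(y')$; passing to the limit defining $S$ yields $w(x)\geq w(y)+S(y,x)$. Integrating this in $y$ against any $\mu\in\mathcal{M}$ and using $\int w\,d\mu\geq0$ gives $w(x)\geq\int S(y,x)\,d\mu(y)$ for every $\mu\in\mathcal{M}$, hence $w(x)\geq V(x)$ after taking the supremum over $\mu$. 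I expect the main obstacle to be the careful handling of the limit $\epsilon\to0$ in the definition of $S$ when passing from the finite-orbit inequality $w(x)\geq w(y')+S_n(A-m(A))(y')$ to $w(x)\geq w(y)+S(y,x)$: one needs the continuity of $w$ to absorb the difference $w(y')-w(y)$ as $d(y',y)\leq\epsilon\to0$, and one must be sure the supremum over admissible $(n,y')$ commutes appropriately with the continuous function $w$, which is where the uniform Lipschitz/continuity bounds from Remark 1 do the real work.
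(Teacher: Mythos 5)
Your proof that $V$ is a subaction and your argument for item 2 are essentially sound: for item 2 you derive the key inequality $w(x)\ge w(y)+S(y,x)$, $y\in\Omega(A)$, by telescoping the subaction inequality along orbits and using the continuity of $w$ to pass to the limit $\epsilon\to0$, where the paper instead quotes Theorem 10 of \cite{GL2}; only the $\ge$ direction is needed, so your more self-contained route is fine. The genuine gap is in item 1. You reduce it to the pointwise claim $V\ge0$ on $\Omega(A)$, justified by ``choosing the diagonal term $y=x$'' so that $V(x)\ge S(x,x)=0$. But $V(x)=\max_{\mu\in\mathcal M}\int S(y,x)\,d\mu(y)$ is a maximum over \emph{maximizing measures}, not a supremum over points of $\Omega(A)$: to extract the value $S(x,x)$ you would need $\delta_x\in\mathcal M$, i.e.\ $x$ a fixed point of $T$. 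Worse, the claim itself is false in general: if the unique maximizing measure is $\mu=\frac12(\delta_p+\delta_{Tp})$ on a period-two orbit with $A(p)\neq A(Tp)$, then $V(p)=\frac12\bigl(S(p,p)+S(Tp,p)\bigr)=\frac12\,(A-m(A))(Tp)<0$ when $A(Tp)<m(A)$. Your fallback is also vacuous: $\int(V\circ T-V)\,d\mu=0$ holds for every invariant $\mu$ and every integrable $V$, and it still presupposes the pointwise bound.

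What is actually needed, and what the paper proves, is the integrated statement $\int\!\!\int S(y,x)\,d\mu(y)\,d\mu(x)=0$ for every \emph{ergodic} $\mu\in\mathcal M$, from which $\int V\,d\mu\ge\int\!\!\int S(y,x)\,d\mu(y)\,d\mu(x)=0$, the general case following by ergodic decomposition. This rests on a nontrivial input that is absent from your plan: by Atkinson's theorem (Ma\~n\'e's lemma on returns with small Birkhoff sums), $\mu\times\mu$-almost every pair $(x,y)$ of points of $\operatorname{supp}(\mu)$ lies in the same static class, i.e.\ $S(x,y)+S(y,x)=0$; combined with the general inequality $S(x,y)+S(y,x)\le0$, the boundedness of $S$ on $\operatorname{supp}(\mu)\times\operatorname{supp}(\mu)$ and Fubini, this yields the vanishing of the double integral. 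Without this step item 1 is not established.
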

\begin{proof}

From Remark 1 we get that, for each $y \in \Omega(A)$, the function $S(y,\cdot)$ is Lipschitz continuous and the Lipschitz constant does not depend of $y$. Therefore $V$ is Lipschitz. For any $x,y\in S^1$  we have $S(y,T(x)) \geq A(x) -m(A)+ S(y,x)$. Then, for any $\mu \in \cal{M}$ we get
$\int S(y,T(x)) \, d\mu(y) \geq A(x) -m(A)+ \int S(y,x)\, d\mu(y) $   which proves that $V(T(x))\geq A(x) + V(x) -m(A)$.

\bigskip

Proof of 1.: It is known (Prop. 23 page 1395 \cite{CLT}) that $S(y,x) + S(x,y)\leq 0$.
We say that  $x$ and $y$  in $\Omega(A)$ are in the same static class if $S(y,x) + S(x,y)= 0$.

For an ergodic maximizing measure $\mu$ we claim that  for $\mu$-almost every pair $x$ and $y$ these two points are in the same static class.

First note that as $A$ is Lipschitz  the function $S$ can be also written as
$$S(y,x)=$$
$$  \lim_{\epsilon \to 0}
\sup\{S_n(A - m(A)) (y')\,|\, n\geq 1,\,d(y',y)\leq \epsilon,\, d(T^n (y'),x)\leq \epsilon\}.
$$

Now we will show the claim: given an ergodic probability $\mu$,  an integrable function $F$ and a Borel set $B$, such that
$\mu(B)>0$, denote by  $\tilde{B}$  the set of points $p$ in  $B$, such that, for all $\epsilon$, there exists
an integer $N > 0 $, with the properties:\[ T^N(p)\in  B \,\,\,\text{  and}\,\,\,\,
 |\sum_{j=0}^{N-1} F(T^j) (p) - N \int F \, d \mu|<\epsilon.\]
It is known that $\mu(B)=\mu(\tilde{B})$ (see \cite{Mane}). This results is known as Atkinson's Theorem.

Let $x, y \in supp(\mu)$ where $x$ is a  Birkhoff point for $A$ and fix $\rho>0$. As $\mu(B(y,\rho))>0$, for some $n$ we have $T^n(x)\in B(y,\rho)$. Let $\rho'<\rho$ be such that for any $x' \in B(x,\rho')$ we have $d(T^j(x),T^j(x'))< \rho$, for all $j\in\{0,...,n\}$.  Consider the set $B=B(x,\rho')$. There exists $p\in B(x,\rho')$, such that, for all $\epsilon$, there exists
an integer $N > 0 $, satisfying $T^N(p)\in  B(x,\rho')$  and $|\sum_{j=0}^{N-1} A(T^j) (p) - N \int A \, d \mu|<\epsilon$. As this is true for a. e. $p\in B(x, \rho')$ we can suppose that $N>n$, replacing $p$ by $T^N(p)$ in the case it is necessary.
It follows that $S(x,y)+S(y,x)=0$ which proves the claim.

As $ S(x,y) = - S(y,x) $ for any $ x,y \in supp(\mu)$, it follows from item 1 of Proposition 3.1 in \cite{CLT1} that there exists $Q>0$, such that,
$-Q < S(x,y) < Q $, for any $x,y \in supp(\mu)$.

We have $\int \int  S(x,y)+S(y,x) \, d \mu(x) \, d\mu (y) =0$, for any $\mu \in \mathcal{ M}$ which is ergodic, then
$\int \int  S(x,y) \, d \mu(x)  d\mu (y)=0,$ using Fubini's Theorem ($S(x,y)$ is integrable by last paragraph).
It follows that
$$ \int V(x) d \mu(x) \geq \int \,\,[ \int S(y,x) \, d \mu(y)  ] \,\,\, d\mu(x)= 0,$$
for any ergodic probability $\mu \in \mathcal{ M}$. The same inequality for a general $\mu \in \mathcal{M}$ follows from the ergodic decomposition theorem.

\bigskip

Proof of 2.: We know (see Theorem 10 in \cite{GL2}) that, for any calibrated subaction $w$ and any $x$,
$$ w(x)= \sup_{ y \in \Omega(A)} \{ \, w(y) + S(y,x)\}.$$
Therefore, for $\mu \in \mathcal{ M}$, such that, $\int w(y) \, d \mu(y)\geq 0,$ we get
$$ w(x)\geq w(x) - \int w(y) \, d \mu(y)\geq \int S(y,x) \, d \mu(y) .$$
If $\int w(y) \, d \mu(y)\geq 0$ for any $\mu \in \cal{M}$ we obtain $w\geq V$.



\end{proof}


\bigskip

Given $\lambda$, $y$ and a $b_\lambda(y)$ realizer $a=a_{0}a_{1}...$, consider the probability
\begin{equation}
\mu_\lambda^y= (1-\lambda) \,\sum_{n=0}^{\infty}\, \lambda^n  \,\delta_{(\, \tau_{a_{n}}\,\circ\,...\,\circ\, \tau_{a_{0}}\, ) (y)}. \label{prob}
\end{equation}  From (6) in \cite{LO} we get that $b_\lambda(y) = S_{\lambda,A}(y,a(y))$, where $a(y)$ is a realizer of $y$, then for any $y$ we have that $$b_\lambda(y) = \frac{1}{(1-\lambda)} \int A \,\,d\mu_\lambda^y.$$
We will show that any limit probability of $\mu_\lambda^y$, as $\lambda \to 1$, belongs to $\cal{M}$.

\begin{lemma}  For any continuous function $w:S^1\to\mathbb{R}$, and probability $\mu_\lambda^y$ as above,  we get
		\begin{equation} \label{io}
	\int w\circ T\,d\mu_{\lambda}^{y} - \int w \,d\mu_{\lambda}^{y}(x)= (1-\lambda) (\,w(y)-   \int w \,d\mu_{\lambda}^{y}\,). \end{equation}
\end{lemma}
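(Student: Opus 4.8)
The plan is to evaluate both sides directly against the explicit atomic measure $\mu_\lambda^y$, exploiting the telescoping coming from the fact that $T$ cancels the outermost inverse branch. First I would introduce the abbreviation $x_n := (\tau_{a_n}\circ\cdots\circ\tau_{a_0})(y)$ for $n\geq 0$ and set $x_{-1}:=y$, so that $\mu_\lambda^y = (1-\lambda)\sum_{n\geq 0}\lambda^n\,\delta_{x_n}$. The key elementary observation is that $T(x_n)=x_{n-1}$ for every $n\geq 0$, since applying $T$ to $\tau_{a_n}\circ\cdots\circ\tau_{a_0}(y)$ undoes $\tau_{a_n}$ and leaves $\tau_{a_{n-1}}\circ\cdots\circ\tau_{a_0}(y)$ (and for $n=0$ it leaves $y=x_{-1}$).

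Next I would simply expand $\int w\circ T\,d\mu_\lambda^y = (1-\lambda)\sum_{n\geq 0}\lambda^n w\bigl(T(x_n)\bigr) = (1-\lambda)\sum_{n\geq 0}\lambda^n w(x_{n-1})$, peel off the $n=0$ term, which contributes $(1-\lambda)w(y)$, and reindex the remaining sum by $m=n-1$ to get $(1-\lambda)\,w(y) + \lambda(1-\lambda)\sum_{m\geq 0}\lambda^m w(x_m) = (1-\lambda)\,w(y) + \lambda\int w\,d\mu_\lambda^y$. Subtracting $\int w\,d\mu_\lambda^y$ from both sides and factoring out $(1-\lambda)$ gives exactly the claimed identity $\int w\circ T\,d\mu_\lambda^y - \int w\,d\mu_\lambda^y = (1-\lambda)\bigl(w(y)-\int w\,d\mu_\lambda^y\bigr)$.

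The only point requiring a remark is the legitimacy of the rearrangement and reindexing of the series: since $w$ is continuous on the compact space $S^1$ it is bounded, so each series has terms dominated by $\|w\|_\infty (1-\lambda)\lambda^n$ and is absolutely convergent, making all the manipulations valid. There is no real obstacle here — the statement is essentially the assertion that $\mu_\lambda^y$ is an ``almost invariant'' probability, with the invariance defect supported at the base point $y$ and weighted by the factor $1-\lambda$; the computation above merely makes this quantitative.
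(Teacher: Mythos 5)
Your proposal is correct and follows essentially the same route as the paper: expand both integrals against the atomic measure, use that $T$ cancels the outermost inverse branch so the $n=0$ term produces $w(y)$, reindex the shifted sum, and recombine. Your added remark on absolute convergence (justifying the reindexing since $w$ is bounded) is a harmless extra precaution the paper leaves implicit.
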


\begin{proof} Indeed,
$$\int (\,w\circ T -  w\,)\,d\mu_{\lambda}^{y} $$
$$ = (1-\lambda) [\, \sum_{n=0}^\infty \lambda^n \,w(T( (\, \tau_{a_{n}}\,\circ\,...\,\circ\, \tau_{a_{0}}\, ) (y) )) -  \sum_{n=0}^\infty \lambda^n \,w((\, \tau_{a_{n}}\,\circ\,...\,\circ\, \tau_{a_{0}}\, ) (y) )\,]  $$
$$ = (1-\lambda) [\,w(y) + \sum_{n=1}^\infty \lambda^n \,w( (\, \tau_{a_{n-1}}\,\circ\,...\,\circ\, \tau_{a_{0}}\, ) (y) ) -  \sum_{n=0}^\infty \lambda^{n} \,w((\, \tau_{a_{n}}\,\circ\,...\,\circ\, \tau_{a_{0}}\, ) (y) )\,]  $$
$$ = (1-\lambda) [\,w(y) + \sum_{n=0}^\infty \lambda^{n+1} \,w( (\, \tau_{a_{n}}\,\circ\,...\,\circ\, \tau_{a_{0}}\, ) (y) ) -  \sum_{n=0}^\infty \lambda^{n} \,w((\, \tau_{a_{n}}\,\circ\,...\,\circ\, \tau_{a_{0}}\, ) (y) )\,]  $$
$$ = (1-\lambda) [\,w(y) + (\lambda-1) \sum_{n=0}^\infty \lambda^n \,w( (\, \tau_{a_{n}}\,\circ\,...\,\circ\, \tau_{a_{0}}\, ) (y) )\,]  $$
$$ = (1-\lambda) [\,w(y) -  \int w\,  d\mu_{\lambda}^{y}\,].$$

\end{proof}

\begin{lemma}  \label{se} Given $y\in S^1$,  any accumulation probability $\mu_{\infty}$, in the weak* topology, of a convergent subsequence $\mu_{\lambda_i}^{y}$, $\lambda_i \to 1$, belongs to $\cal{M}$.
\end{lemma}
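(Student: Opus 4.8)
The plan is to exploit equation~\eqref{io} from the previous lemma. First I would check that $\mu_\infty$ is $T$-invariant. Fix a continuous $w:S^1\to\mathbb{R}$. By~\eqref{io} we have $\int w\circ T\,d\mu_{\lambda_i}^y - \int w\,d\mu_{\lambda_i}^y = (1-\lambda_i)\bigl(w(y)-\int w\,d\mu_{\lambda_i}^y\bigr)$. The right-hand side is bounded in absolute value by $(1-\lambda_i)\cdot 2\|w\|_\infty$, which tends to $0$ as $\lambda_i\to 1$. Passing to the limit along the convergent subsequence, weak* convergence gives $\int w\circ T\,d\mu_\infty = \int w\,d\mu_\infty$ for every continuous $w$, hence $\mu_\infty$ is invariant.

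Next I would show $\mu_\infty$ is $A$-maximizing, i.e.\ $\int A\,d\mu_\infty = m(A)$. Recall $b_\lambda(y) = \tfrac{1}{1-\lambda}\int A\,d\mu_\lambda^y$, so $\int A\,d\mu_\lambda^y = (1-\lambda)\,b_\lambda(y)$. Since the family $b_\lambda$ is equicontinuous and, by Theorem~\ref{goal} (or the classical discounted-method estimates already cited), $b_\lambda - \tfrac{m(A)}{1-\lambda}$ stays uniformly bounded, we get $(1-\lambda)\,b_\lambda(y) = m(A) + (1-\lambda)U_\lambda(y) \to m(A)$ as $\lambda\to 1$, uniformly in $y$. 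By weak* convergence, $\int A\,d\mu_{\lambda_i}^y \to \int A\,d\mu_\infty$; comparing the two limits yields $\int A\,d\mu_\infty = m(A)$. Combined with invariance, this places $\mu_\infty$ in $\mathcal{M}$.

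The one point that requires a little care — and is the main obstacle — is justifying that $(1-\lambda)b_\lambda(y)\to m(A)$ without circularly invoking Theorem~\ref{goal} if one wants this lemma to feed into that theorem's proof. This is avoided by using only the classical, independently established fact (cited above via \cite{Bou,T1,BKRU}) that the family $b_\lambda - \sup b_\lambda$ is equicontinuous and any uniform limit along $\lambda_n\to 1$ is a calibrated subaction, together with the standard identity $(1-\lambda)\sup b_\lambda \to m(A)$; alternatively, one uses directly that $\|b_\lambda - \tfrac{m(A)}{1-\lambda}\|_\infty$ is bounded, which is a known a priori estimate in the discounted method. Everything else is a routine passage to the limit in~\eqref{io}, so no further computation is needed.
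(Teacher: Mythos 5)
Your proposal is correct and follows essentially the same route as the paper: invariance of $\mu_{\infty}$ via \eqref{io}, and maximization via the uniform convergence $(1-\lambda)b_{\lambda}\to m(A)$, which the paper likewise takes from the cited discounted-method literature (so your non-circular alternative is exactly what is needed). The only cosmetic difference is that you invoke the identity $\int A\,d\mu_{\lambda}^{y}=(1-\lambda)b_{\lambda}(y)$ directly, whereas the paper re-derives the same relation by integrating the calibration identity \eqref{eq3} against $\mu_{\lambda_i}^{y}$ and controlling the resulting coboundary term.
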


\begin{proof}
It follows from above lemma that $\mu_{\infty}$ is invariant.
Moreover, by (\ref{eq3}) and definition of $\mu_{\lambda}^{y} $ we have that
\[\int b_{\lambda_{i}}(T(x)) - \lambda_{i}b_{\lambda_{i}}(x) - A(x) \, d \mu_{\lambda_i}^{y}(x) = 0.\]
Then,
\[\int b_{\lambda_{i}}(T(x))-b_{\lambda_{i}}(x) \, d \mu_{\lambda_i}^{y}(x) +\int   (1- \lambda_{i})b_{\lambda_{i}}(x) - A(x) \, d  \mu_{\lambda_i}^{y}(x)= 0.\]
When, $i\to\infty$ the left integral converges to zero. Therefore,
\[\lim_{i\to\infty} \int   (1- \lambda_{i})b_{\lambda_{i}}(x) - A(x) \, d  \mu_{\lambda_i}^{y}(x) = 0.\]
It is known (see for instance end of Theorem 11 in \cite{BCLMS} or \cite{LoT}) that $(1- \lambda)\inf b_{\lambda} \to m(A)$, uniformly with $\lambda \to 1$. Therefore,
$$\int ( m(A) - A )\, d\mu_{\infty} = \lim_{i\to\infty} \int   [\,(1- \lambda_{i})\inf b_{\lambda_{i}} - A(x) \,]\, d  \mu_{\lambda_i}^{y}(x)\leq$$
$$ \lim_{i\to\infty} \int  [\, (1- \lambda_{i})b_{\lambda_{i}}(x) - A(x) \,]\, d  \mu_{\lambda_i}^{y}(x) = 0,$$
proving the claim.

\end{proof}

\begin{lemma}\label{dual}
The family of functions \[U_{\lambda}:= b_{\lambda} - \frac{m(A)}{1-\lambda}\]
is equicontinuous and uniformly bounded. Furthermore, for any maximizing probability $\mu \in \cal{M}$ we have
\[\int U_{\lambda} \, d\mu \geq 0,\,\,\,\,\,\forall \lambda \in (0,1),\]
and for any  subaction $w$ we have
\[ U_\lambda (y) \leq  \,w(y) \,- \int w \,d \mu_{\lambda}^{y},\,\,\, \,\, \forall \lambda \in (0,1)\,,\,\forall y\in S^1.\]
\end{lemma}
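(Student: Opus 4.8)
Proof proposal for Lemma \ref{dual}.

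The plan is to prove the three assertions one at a time, treating equicontinuity first, then the two inequalities, and finally deducing uniform boundedness \emph{from} those two inequalities together with equicontinuity; in particular the boundedness statement should be proved last. For equicontinuity one may simply invoke the equicontinuity of $\{b_\lambda\}$ recalled in the introduction, since $U_\lambda$ differs from $b_\lambda$ by a constant. Concretely, using $b_\lambda(x)=\sup_a S_{\lambda,A}(x,a)$ and the fact that each composition $\tau_{a_k}\circ\dots\circ\tau_{a_0}$ contracts distances on $S^1$ by the factor $d^{-(k+1)}$, one gets, with $L$ the Lipschitz constant of $A$, that $|b_\lambda(x)-b_\lambda(x')|\le\big(\sum_{k\ge0}\lambda^k L\,d^{-(k+1)}\big)|x-x'|\le\frac{L}{d-1}|x-x'|$, a Lipschitz bound independent of $\lambda$. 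Hence the family $U_\lambda$ is equicontinuous and, $S^1$ being compact, all $U_\lambda$ have oscillation bounded by a fixed constant $C_0$.

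For assertion 2, note that (\ref{blambda}) gives $b_\lambda(T(y))\ge\lambda\,b_\lambda(y)+A(y)$ for every $y$, i.e. $b_\lambda(T(y))-b_\lambda(y)\ge A(y)-(1-\lambda)b_\lambda(y)$. Integrating against an invariant probability $\mu$ annihilates the left-hand side, so $0\ge\int A\,d\mu-(1-\lambda)\int b_\lambda\,d\mu$; if $\mu\in\mathcal{M}$ then $\int A\,d\mu=m(A)$, hence $(1-\lambda)\int b_\lambda\,d\mu\ge m(A)$, which is exactly $\int U_\lambda\,d\mu\ge0$.

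For assertion 3, fix a subaction $w$ and a point $y\in S^1$. Since $w(T(x))\ge A(x)+w(x)-m(A)$ for all $x$, integrating against $\mu_\lambda^y$ gives $\int w\circ T\,d\mu_\lambda^y-\int w\,d\mu_\lambda^y\ge\int A\,d\mu_\lambda^y-m(A)$. By the identity $b_\lambda(y)=\frac{1}{1-\lambda}\int A\,d\mu_\lambda^y$ recalled just before (\ref{io}), the right-hand side equals $(1-\lambda)b_\lambda(y)-m(A)$; and by the previous lemma, equation (\ref{io}), the left-hand side equals $(1-\lambda)\big(w(y)-\int w\,d\mu_\lambda^y\big)$. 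Dividing by $1-\lambda>0$ yields $w(y)-\int w\,d\mu_\lambda^y\ge b_\lambda(y)-\frac{m(A)}{1-\lambda}=U_\lambda(y)$, as claimed.

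Finally, for uniform boundedness, fix any $\mu\in\mathcal{M}$; assertion 2 gives $\sup U_\lambda\ge\int U_\lambda\,d\mu\ge0$, so $\inf U_\lambda\ge\sup U_\lambda-C_0\ge-C_0$. Fix any subaction $w_0$ (for instance $V$, which is a subaction by Lemma \ref{lemmai}, or $S(y_0,\cdot)$ with $y_0\in\Omega(A)$); assertion 3 gives $U_\lambda(y)\le w_0(y)-\int w_0\,d\mu_\lambda^y\le\sup w_0-\inf w_0$ for every $y$. Thus $-C_0\le U_\lambda\le\sup w_0-\inf w_0$ uniformly in $\lambda$. I do not expect a genuine obstacle here; the only things to be careful about are the logical order (the bound is derived \emph{after} assertions 2 and 3) and the correct use of the identity $\int A\,d\mu_\lambda^y=(1-\lambda)b_\lambda(y)$ and of equation (\ref{io}) on which assertion 3 rests.
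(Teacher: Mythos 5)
Your proposal is correct and follows essentially the same route as the paper: the integral inequality comes from integrating $b_\lambda\circ T\ge \lambda b_\lambda+A$ against an invariant maximizing measure, the pointwise upper bound comes from integrating the subaction inequality against $\mu_\lambda^y$ and using equation (\ref{io}) together with $b_\lambda(y)=\frac{1}{1-\lambda}\int A\,d\mu_\lambda^y$, and uniform boundedness is then deduced by combining the upper bound, the existence of a point where $U_\lambda\ge 0$, and equicontinuity. Your explicit computation of the $\lambda$-uniform Lipschitz constant $L/(d-1)$ is a small self-contained addition where the paper simply cites the equicontinuity of $\{b_\lambda\}$.
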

\begin{proof}
As  $b_{\lambda}(T(z)) - \lambda b_{\lambda}(z) - A(z)\,\geq 0,$  for any maximizing probability $\mu \in \cal{M}$ we have that
\begin{equation} \label{kro} \int U_{\lambda} \, d\mu = \int b_{\lambda} - \frac{m(A)}{1-\lambda}  \, d\mu =\frac{1}{1-\lambda}\int (1-\lambda)b_{\lambda} - m(A)  \, d\mu \geq 0.
\end{equation}
In particular this proves that there exists $x_\lambda \in S^1$, such that, $U_{\lambda}(x_\lambda)\geq 0.$

On the other hand, if $w$ is a  subaction we have that
$$A-m(A) \leq w\circ T- w,$$
therefore, using (\ref{io}), for any $\lambda$ and $y$ we have
\begin{align*}
U_\lambda (y) &=\frac{1}{1-\lambda} \, \int A \,d\, \mu_{\lambda}^{y} - \frac{m(A)}{1-\lambda}\leq   \frac{1}{1-\lambda} \left[\int w\circ T\,d\mu_{\lambda}^{y} - \int w\,d\mu_{\lambda}^{y} \right]   \\
&=    \,w(y)-   \int w \,d\,\mu_{\lambda}^{y}.
\end{align*}

Therefore,  the functions $U_{\lambda_{n}}(x)$ are uniformly bounded above.

As the functions $b_{\lambda}$ are equicontinuous in $\lambda<1$ (see \cite{LO}),  the family of functions $U_\lambda$ is equicontinuous. As $U_\lambda$ are uniformly bounded above and $U_\lambda (x_\lambda)\geq 0$ we conclude that this family is also uniformly bounded. 	
\end{proof}

\begin{lemma}\label{calibrated}
Any limit of $U_{\lambda}:=b_{\lambda} - \frac{m(A)}{1-\lambda} $, as $\lambda \to 1$, is a calibrated subaction.	
\end{lemma}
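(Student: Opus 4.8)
The plan is to take any convergent subsequence $U_{\lambda_n} \to U_\infty$ uniformly (such subsequences exist by Arzel\`a--Ascoli, since Lemma \ref{dual} provides equicontinuity and uniform boundedness of the family) and to verify the calibration identity $U_\infty(y) = \max_{T(x)=y}\{A(x) + U_\infty(x) - m(A)\}$ for every $y \in S^1$, proving the ``$\geq$'' and ``$\leq$'' directions separately.

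For ``$\geq$'', I would start from the inequality $b_\lambda(T(z)) \geq \lambda b_\lambda(z) + A(z)$, which is immediate from (\ref{blambda}), and rewrite it in terms of $U_\lambda$; substituting $b_\lambda = U_\lambda + \frac{m(A)}{1-\lambda}$ produces $U_\lambda(T(z)) \geq \lambda U_\lambda(z) + A(z) - m(A)$ for every $z$. Since $U_\lambda$ is uniformly bounded, $(1-\lambda)U_\lambda \to 0$ uniformly, so passing to the limit along $\lambda_n \to 1$ gives $U_\infty(T(z)) \geq U_\infty(z) + A(z) - m(A)$; in particular $U_\infty(y) \geq A(x) + U_\infty(x) - m(A)$ for every $x \in T^{-1}(y)$, and also $U_\infty$ is a subaction.

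For ``$\leq$'', I would fix $y$, choose for each $n$ a $b_{\lambda_n}(y)$ realizer $a^{(n)} = a^{(n)}_0 a^{(n)}_1 \cdots$, and invoke the identity displayed just after (\ref{eq3}) with $k=0$, namely $b_{\lambda_n}(y) = \lambda_n b_{\lambda_n}(x_n) + A(x_n)$ with $x_n := \tau_{a^{(n)}_0}(y) \in T^{-1}(y)$; in terms of $U$ this reads $U_{\lambda_n}(y) = \lambda_n U_{\lambda_n}(x_n) + A(x_n) - m(A)$. Because $T^{-1}(y)$ has only $d$ elements, after passing to a further subsequence the preimage $x_n$ is a fixed $x_* = x_*(y)$; letting $n\to\infty$ (again using boundedness, so that $\lambda_n U_{\lambda_n}(x_*) \to U_\infty(x_*)$) yields $U_\infty(y) = A(x_*) + U_\infty(x_*) - m(A) \le \max_{T(x)=y}\{A(x)+U_\infty(x)-m(A)\}$. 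Combined with the previous paragraph this gives equality at every $y$, so $U_\infty$ is a calibrated subaction.

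The computations involved are routine; the only genuinely delicate point is the pigeonhole passage in the ``$\leq$'' direction: the realizer, and hence the selected preimage $x_n$, may jump with $n$, and one must extract a subsequence along which it stabilizes before taking the limit. This is harmless --- any sub-subsequence of $(U_{\lambda_n})$ still converges to the same $U_\infty$, and calibration only demands the \emph{existence} of an optimal preimage at each $y$, which is precisely what the argument delivers.
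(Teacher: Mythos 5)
Your argument is correct and is essentially the paper's own proof: both rewrite the fixed-point equation (\ref{blambda}) in terms of $U_\lambda$, pass to the limit using uniform boundedness (so $(1-\lambda_n)U_{\lambda_n}\to 0$) to get the subaction inequality, and then use the pigeonhole on the $d$-element set $T^{-1}(y)$ to stabilize the maximizing preimage along a further subsequence and obtain equality. The only cosmetic difference is that you phrase the maximizing preimage via a realizer's first symbol rather than directly as the point attaining the finite supremum, which is the same thing.
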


\begin{proof} Let $U$ be the limit of the subsequence $U_{\lambda_n}=b_{\lambda_n} - \frac{m(A)}{1-\lambda_n} $, when $n \to \infty$. From (\ref{blambda}) we get
\[ b_{\lambda}(x) - \frac{m(A)}{1-\lambda} =\sup_{T(y)=x} \lambda \,[ b_\lambda(y) -  \frac{m(A)}{1-\lambda}] + A(y) -  m(A),  \]
that is,
\[U_{\lambda}(x) = \sup_{T(y)=x} \lambda U_{\lambda}(y)  +A(y) -m(A).  \]
Then, as $\lambda_n \to 1$ we conclude that  $U$  is a subaction. Furthermore, for any point $x \in S^1$, there is some point $y_0 \in T^{-1}(x)$ attaining the  supremum of $\sup_{T(y)=x} \lambda_n U_{\lambda_n}(y)  +A(y) -m(A)$, for infinitely many values of $n$. In this way we get
\[U(x)= A(y_0) +U(y_0)-m(A).\]
This proves that $U$ is calibrated.
\end{proof}

\bigskip

\noindent
\textbf{Proof of Theorem \ref{goal}:} We denote $U$ any limit of $U_{\lambda_n}:=b_{\lambda_n} - \frac{m(A)}{1-\lambda_n} $, when $n \to \infty$.
We know that $U$ is a calibrated subaction
and we want to show that $U=V$.

From lemma \ref{dual}, for any maximizing probability $\mu \in \cal{M}$, we have that $\int U\, d\mu \geq 0$. If follows from lemma \ref{lemmai} that $U\geq V$.

\bigskip

Now we will show that $U\leq V$. From lemma \ref{lemmai}  the subaction $V$ satisfies $\int V d \mu \geq 0$, for any $\mu \in \mathcal{M}$, and from  lemma \ref{dual} we get, for any $y$ and $\lambda$, the inequality
$$ U_\lambda (y) \leq  \,V(y)-   \int V \,d\,\mu_{\lambda}^{y}\,.$$
If $\lambda_{n_i}$ is a subsequence of ${\lambda_n}$, such that, $ \mu_{\lambda_{n_i}}^{y}\to \mu_{\infty}$, then, from lemma \ref{se} we have that $\mu_\infty \in \cal{M}$. Therefore, we finally get that
$$U(y)= \lim_{\lambda_{n_i} \to 1} U_{\lambda_{n_i}} (y) \leq  \,V(y)-   \int V \,d\,\mu_\infty \leq V(y).$$
\qed

\begin{corollary}\label{Vcalibrated}
$V$ is a calibrated subaction.
\end{corollary}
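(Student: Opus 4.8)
The plan is to read the corollary off the two results that immediately precede it, with no extra work. By Theorem \ref{goal} the family $U_\lambda = b_\lambda - \frac{m(A)}{1-\lambda}$ converges uniformly to $V$ as $\lambda \to 1$; in particular, fixing any sequence $\lambda_n \uparrow 1$, the functions $U_{\lambda_n}$ converge uniformly to $V$. Lemma \ref{calibrated} says precisely that any uniform limit of $U_{\lambda_n}$ along $\lambda_n \to 1$ is a calibrated subaction. Combining the two, $V$ is a calibrated subaction. So the write-up I would give is essentially one line invoking Theorem \ref{goal} and Lemma \ref{calibrated}; there is no obstacle to overcome.

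For completeness I would also note why a more self-contained argument is less convenient. Lemma \ref{lemmai} already shows that $V$ is a subaction, so one would only need to verify the calibration identity $V(x) = \max_{T(z)=x}\{A(z) + V(z) - m(A)\}$. By Remark 1 each $S(y,\cdot)$ with $y \in \Omega(A)$ is calibrated, and a finite maximum of calibrated subactions is again calibrated, since $\max_i \max_{T(z)=x}\{A(z)+w_i(z)-m(A)\} = \max_{T(z)=x}\{A(z)+\max_i w_i(z)-m(A)\}$. The difficulty is that $V$ is not a maximum but a maximum of \emph{averages} $\int S(y,\cdot)\,d\mu(y)$ over $\mu\in\mathcal{M}$, and an average of calibrated subactions is in general only a subaction: integrating $S(y,T(x))\geq A(x)-m(A)+S(y,x)$ in $y$ preserves the inequality but not the attainment of equality at a common preimage $z$. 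Pushing this through would require reducing to ergodic $\mu$ and then using the representation $w(x)=\sup_{y\in\Omega(A)}\{w(y)+S(y,x)\}$ (Theorem 10 of \cite{GL2}) together with a compactness argument. Since the route through Theorem \ref{goal} and Lemma \ref{calibrated} bypasses all of this, that is the one I would take.
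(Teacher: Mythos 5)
Your proof is correct and is exactly the paper's own argument: the corollary is deduced by combining Theorem \ref{goal} with Lemma \ref{calibrated}. The additional discussion of why a direct verification would be harder is a reasonable aside but is not needed.
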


\begin{proof}
It is a consequence of lemma \ref{calibrated} and Theorem \ref{goal}.	
\end{proof}

\medskip
\section{Selection for the zero temperature case}

Now we will prove Theorem \ref{goal2}.

We consider for each $\beta>0$ (the inverse of the temperature) and for $\lambda <1$ the operator
\[\mathfrak{S}_{\lambda,\beta}(u)(x) = \log \left(\sum_{T(y)=x}e^{\beta\,A(y) +\lambda u(y)}\right).\]
It is known that $\mathfrak{S}_{\lambda,\beta}$ is a contraction map (see for instance sections 6 and 7 in \cite{Bou}, \cite{LMMS} or \cite{LoT}) with a unique fixed point $u_{\lambda,\beta}$ satisfying
\[e^{u_{\lambda,\beta}(x)} = \sum_{T(y)=x}e^{\beta\,A(y) +\lambda\,u_{\lambda,\beta}(y)}.\]
For each fixed $\beta$, the family $u_{\lambda,\beta}$ is  equicontinuous on $0<\lambda<1$ with uniform constant given by $\beta Lip(A)$. Therefore, for each $\beta$ fixed the function $u_{\lambda,\beta} - \frac{P(\beta\,A)}{1-\lambda}$ is Lipschitz continuous with Lipschitz constant $\beta Lip(A)$.

\begin{lemma}
\[\inf u_{\lambda,\beta} \leq \frac{P(\beta\,A)}{1-\lambda} \leq \sup u_{\lambda,\beta}.\]
\end{lemma}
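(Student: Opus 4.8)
The plan is to rewrite the fixed point equation in terms of the Ruelle operator $L_{\beta A}(g)(x)=\sum_{T(y)=x}e^{\beta A(y)}g(y)$, so that the defining identity for $u:=u_{\lambda,\beta}$ becomes $e^{u}=L_{\beta A}\bigl(e^{\lambda u}\bigr)$ pointwise on $S^1$. I will couple this with the classical Ruelle--Perron--Frobenius asymptotics for the Lipschitz potential $\beta A$ (see \cite{PP}): the spectral radius of $L_{\beta A}$ is $e^{P(\beta A)}$ and there is a strictly positive continuous eigenfunction $\phi_\beta$ with $L_{\beta A}(\phi_\beta)=e^{P(\beta A)}\phi_\beta$. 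Comparing the constant function $\mathbf 1$ with $\phi_\beta$ (both bounded above and below by positive constants on the compact $S^1$) yields a constant $C\geq 1$ with $C^{-1}e^{nP(\beta A)}\leq L_{\beta A}^{\,n}(\mathbf 1)(x)\leq C\,e^{nP(\beta A)}$ for all $n\geq 1$ and all $x\in S^1$. Note this is a genuinely prior fact and not circular, since it does not involve $\lambda$.

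The key observation is a one-step comparison using that $u$ is bounded (it is Lipschitz on $S^1$, with constant $\beta\,\mathrm{Lip}(A)$). Writing $\lambda u=u-(1-\lambda)u$ and using $\inf u\leq u(y)\leq\sup u$ one gets the pointwise inequalities $e^{-(1-\lambda)\sup u}\,e^{u}\leq e^{\lambda u}\leq e^{-(1-\lambda)\inf u}\,e^{u}$. Applying the positive (hence monotone) operator $L_{\beta A}$ and using $e^{u}=L_{\beta A}(e^{\lambda u})$ gives
\[ e^{-(1-\lambda)\sup u}\,L_{\beta A}(e^{u})\ \leq\ e^{u}\ \leq\ e^{-(1-\lambda)\inf u}\,L_{\beta A}(e^{u}). \]
Since $L_{\beta A}$ is monotone and the exponential prefactors are constants, iterating $n$ times produces
\[ e^{-n(1-\lambda)\sup u}\,L_{\beta A}^{\,n}(e^{u})\ \leq\ e^{u}\ \leq\ e^{-n(1-\lambda)\inf u}\,L_{\beta A}^{\,n}(e^{u}). \]

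To conclude I would sandwich $e^{\inf u}\,L_{\beta A}^{\,n}(\mathbf 1)\leq L_{\beta A}^{\,n}(e^{u})\leq e^{\sup u}\,L_{\beta A}^{\,n}(\mathbf 1)$ and insert the two-sided pressure bound. From the right inequality, evaluated at an arbitrary $x$, one obtains $u(x)\leq -n(1-\lambda)\inf u+\sup u+\log C+nP(\beta A)$; dividing by $n$ and letting $n\to\infty$ gives $0\leq-(1-\lambda)\inf u+P(\beta A)$, that is $\inf u_{\lambda,\beta}\leq \frac{P(\beta A)}{1-\lambda}$. Symmetrically, the left inequality gives $u(x)\geq-n(1-\lambda)\sup u+\inf u-\log C+nP(\beta A)$, hence $\sup u_{\lambda,\beta}\geq \frac{P(\beta A)}{1-\lambda}$ after the same limit. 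I do not foresee a serious obstacle; the only delicate point is that the control $C^{-1}e^{nP(\beta A)}\le L_{\beta A}^{\,n}(\mathbf 1)(x)\le C e^{nP(\beta A)}$ must be uniform in $x\in S^1$, so that the additive $O(1)$ and $O(\log C)$ errors disappear after dividing by $n$ — and this uniformity is precisely what the existence of a strictly positive continuous eigenfunction $\phi_\beta$ guarantees.
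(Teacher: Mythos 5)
Your argument is correct, but it follows a genuinely different route from the paper. You exploit the rewriting $e^{u}=L_{\beta A}(e^{\lambda u})$, the monotonicity of the transfer operator, and the Ruelle--Perron--Frobenius growth estimate $C^{-1}e^{nP(\beta A)}\le L_{\beta A}^{\,n}(\mathbf 1)\le C\,e^{nP(\beta A)}$; the pressure then appears as the exponential growth rate of $L_{\beta A}^{\,n}(e^{u})$, and the sandwich $e^{-n(1-\lambda)\sup u}L_{\beta A}^{\,n}(e^{u})\le e^{u}\le e^{-n(1-\lambda)\inf u}L_{\beta A}^{\,n}(e^{u})$ gives both inequalities after dividing by $n$. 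Your induction step and the sign bookkeeping are right, and the uniformity in $x$ of the two-sided bound on $L_{\beta A}^{\,n}(\mathbf 1)$ is indeed the only point that needs care; the strictly positive continuous eigenfunction $\phi_\beta$ supplies it. The paper instead observes that the fixed-point equation normalizes the potential, so that $P(\beta A+\lambda u_{\lambda,\beta}-u_{\lambda,\beta}\circ T)=0$, and then invokes the variational principle twice: testing against the equilibrium state $\mu_0$ of the normalized potential yields $(1-\lambda)\int u_{\lambda,\beta}\,d\mu_0\le P(\beta A)$, and testing against the equilibrium state $\mu_1$ of $\beta A$ yields $(1-\lambda)\int u_{\lambda,\beta}\,d\mu_1\ge P(\beta A)$. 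That argument is shorter and actually produces the slightly stronger integral inequalities (against specific invariant measures) rather than just bounds on $\inf$ and $\sup$; your argument, in exchange, avoids entropy and equilibrium states altogether and only uses positivity of $L_{\beta A}$ together with the spectral growth rate of $L_{\beta A}^{\,n}\mathbf 1$, which makes it somewhat more elementary and portable to settings where the variational principle is less convenient.
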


\begin{proof}
By definition
\[e^{u_{\lambda,\beta}(x)} = \sum_{T(y)=x}e^{\beta\,A(y) +\lambda u_{\lambda,\beta}(y)}.\]
Then, it follows that
\[\sum_{T(y)=x}e^{\beta\,A(y) +\lambda \,u_{\lambda,\beta}(y) - u_{\lambda,\beta}(x)}=1.\]
Therefore,
\[P(\beta\,A +\lambda\,u_{\lambda,\beta} - u_{\lambda,\beta}\circ T)=0.\]
Let $\mu_0$ be the equilibrium probability for $\beta\,A +\lambda\,u_{\lambda,\beta} - u_{\lambda,\beta}\circ T$.

Then,
\[(1-\lambda) \int u_{\lambda,\beta} \, d\mu_{0} = (1-\lambda) \int u_{\lambda,\beta} \, d\mu_{0} + \int (\beta\,A +\lambda\,u_{\lambda,\beta} - u_{\lambda,\beta}\circ T) \, d\mu_{0} + h(\mu_0)\]
\[= \int \beta\,A \, d\mu_{0} + h(\mu_0) \leq P(\beta\,A).\]
It follows that
\[\inf u_{\lambda,\beta} \leq \frac{P(\beta\,A)}{1-\lambda}.\]
On the other hand, if $\mu_1$ is the equilibrium probability of $\beta\,A$, then
\[(1-\lambda) \int u_{\lambda,\beta} \, d\mu_{1} = (1-\lambda) \int u_{\lambda,\beta} \, d\mu_{1} + P(\beta\,A +\lambda\,u_{\lambda,\beta} - u_{\lambda,\beta}\circ T) \]
\[\geq  (1-\lambda) \int u_{\lambda,\beta} \, d\mu_{1} + \int (\beta\,A +\lambda\,u_{\lambda,\beta} - u_{\lambda,\beta}\circ T) \, d\mu_{1} + h(\mu_1)\]
\[=\int \beta\,A \, d\mu_{1} + h(\mu_1) = P(\beta\,A).\]
Therefore,
\[\sup u_{\lambda,\beta} \geq \frac{P(\beta\,A)}{1-\lambda}.\]
\end{proof}


\begin{lemma}\label{caselambda}
For each fixed $\beta$, the functions $u^{*}_{\lambda,\beta}:=u_{\lambda,\beta}-\frac{P(\beta\,A)}{1-\lambda}$ are Lipschitz functions, with the same Lipschitz constant $H=\beta\,Lip(A)$, and, moreover,  uniformly bounded by $-H$ and $H$. They also satisfy
\[e^{u^{*}_{\lambda,\beta}(x)} = e^{-P(\beta\,A)}\sum_{T(y)=x}e^{\beta\,A(y) +\lambda\,u^{*}_{\lambda,\beta}(y)}.\]
When $\lambda \to 1$, any accumulation function of $e^{u^{*}_{\lambda,\beta}}$ will be an eigenfunction of the Ruelle Operator  $L_{\beta\,A}$ associated to the maximal eigenvalue $e^{P(\beta\,A)}$.
\end{lemma}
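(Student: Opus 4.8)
The plan is to establish the four assertions in sequence, each building on the previous one. First I would prove the Lipschitz estimate. The hypotheses already hand us (from the paragraph preceding the previous lemma) that $u_{\lambda,\beta}$ is equicontinuous in $\lambda$ with modulus $\beta\,Lip(A)$; more precisely one should check directly from the fixed-point equation
\[e^{u_{\lambda,\beta}(x)} = \sum_{T(y)=x}e^{\beta A(y)+\lambda u_{\lambda,\beta}(y)}\]
that $|u_{\lambda,\beta}(x)-u_{\lambda,\beta}(x')|\le \beta\,Lip(A)\,d(x,x')$, using that the inverse branches $\tau_j$ are contractions (distance divided by $d$) and arguing by a standard telescoping/bootstrap on the contraction $\mathfrak{S}_{\lambda,\beta}$. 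Since $u^{*}_{\lambda,\beta}$ differs from $u_{\lambda,\beta}$ by the additive constant $\frac{P(\beta A)}{1-\lambda}$, it has the same Lipschitz constant $H=\beta\,Lip(A)$.

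Next, the uniform bound. By the previous lemma, $\inf u_{\lambda,\beta}\le \frac{P(\beta A)}{1-\lambda}\le \sup u_{\lambda,\beta}$, so $\inf u^{*}_{\lambda,\beta}\le 0\le \sup u^{*}_{\lambda,\beta}$; in particular there is a point where $u^{*}_{\lambda,\beta}$ vanishes. Combining this with the Lipschitz bound on the circle of diameter at most $1$ gives $-H\le u^{*}_{\lambda,\beta}\le H$ everywhere, which is exactly the claimed uniform bound. The functional equation for $u^{*}_{\lambda,\beta}$ is then just the equation for $u_{\lambda,\beta}$ with the constant $\frac{P(\beta A)}{1-\lambda}$ redistributed: subtracting $u^{*}_{\lambda,\beta}(x)=u_{\lambda,\beta}(x)-\frac{P(\beta A)}{1-\lambda}$ from both sides and using $\lambda u_{\lambda,\beta}(y)=\lambda u^{*}_{\lambda,\beta}(y)+\frac{\lambda P(\beta A)}{1-\lambda}$, the exponent on the right becomes $\beta A(y)+\lambda u^{*}_{\lambda,\beta}(y)+\frac{\lambda P(\beta A)}{1-\lambda}-\frac{P(\beta A)}{1-\lambda}=\beta A(y)+\lambda u^{*}_{\lambda,\beta}(y)-P(\beta A)$, yielding the stated identity.

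Finally, the convergence to an eigenfunction. By the first two steps the family $\{e^{u^{*}_{\lambda,\beta}}\}_{\lambda<1}$ is uniformly bounded and equicontinuous (the exponential of a uniformly bounded equi-Lipschitz family), so by Arzel\`a--Ascoli every sequence $\lambda_n\to 1$ has a uniformly convergent subsequence; let $\phi_\beta$ be such a limit. Passing to the limit in
\[e^{u^{*}_{\lambda,\beta}(x)} = e^{-P(\beta A)}\sum_{T(y)=x}e^{\beta A(y)+\lambda u^{*}_{\lambda,\beta}(y)},\]
the left side converges to $\phi_\beta(x)$ and, since the sum is finite and $A$ is continuous while $e^{\lambda_n u^{*}_{\lambda_n,\beta}}\to e^{u^{*}_\beta}=\phi_\beta$ uniformly (here one also uses $\lambda_n\to 1$ to replace $e^{\lambda_n u^{*}_{\lambda_n,\beta}(y)}$ by $e^{u^{*}_\beta(y)}$, which is legitimate because the exponents stay in $[-H,H]$), the right side converges to $e^{-P(\beta A)}\sum_{T(y)=x}e^{\beta A(y)}\phi_\beta(y)=e^{-P(\beta A)}\,L_{\beta A}\phi_\beta(x)$. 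Hence $L_{\beta A}\phi_\beta=e^{P(\beta A)}\phi_\beta$, and $\phi_\beta$ is strictly positive because $u^{*}_{\lambda,\beta}\ge -H$ forces $\phi_\beta\ge e^{-H}>0$; so it is the (unique up to scale) maximal eigenfunction. I expect the only mildly delicate point to be the simultaneous passage to the limit in both the function $u^{*}_{\lambda_n,\beta}$ and the coefficient $\lambda_n$ inside the exponential — but the uniform bound $|u^{*}_{\lambda,\beta}|\le H$ makes the map $(\lambda,v)\mapsto e^{\lambda v}$ uniformly continuous on the relevant compact set, so this causes no real trouble.
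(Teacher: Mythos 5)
Your proposal is correct and follows essentially the same route as the paper: the bound $-H\le u^{*}_{\lambda,\beta}\le H$ comes from combining the previous lemma ($\inf u_{\lambda,\beta}\le \frac{P(\beta A)}{1-\lambda}\le\sup u_{\lambda,\beta}$) with the Lipschitz constant $H=\beta\,Lip(A)$, the functional equation is the same direct algebraic rewriting, and the eigenfunction claim follows by passing to the limit in that equation along a uniformly convergent subsequence. Your extra care about the joint limit in $\lambda_n$ and $u^{*}_{\lambda_n,\beta}$ inside the exponential, and the observation that the limit is bounded below by $e^{-H}>0$, are fine elaborations of details the paper leaves implicit.
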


\begin{proof} As $u_{\lambda,\beta}$ is equicontinuous the same is true for $u^{*}_{\lambda,\beta}$. Using the equicontinuity (with constant $H$) we have that for any $x$:
\[-H \leq u_{\lambda,\beta}(x)-\sup u_{\lambda,\beta}\leq u_{\lambda,\beta}(x)-\frac{P(\beta\,A)}{1-\lambda} \leq  u_{\lambda,\beta}(x)-\inf u_{\lambda,\beta} \leq H.\]
Furthermore,
\[e^{u^{*}_{\lambda,\beta}(x)} = e^{u_{\lambda,\beta}(x)-\frac{P(\beta\,A)}{1-\lambda}}\]
\[=e^{u_{\lambda,\beta}(x)-\lambda \frac{P(\beta\,A)}{1-\lambda}- P(\beta\,A)}\]
\[=e^{-P(\beta\,A)}\sum_{T(y)=x}e^{\beta\,A(y) +\lambda\,\left(u_{\lambda,\beta}(y)-\frac{P(\beta\,A)}{1-\lambda}\right)}\]
\[=e^{-P(\beta\,A)}\sum_{T(y)=x}e^{\beta\,A(y) +\lambda\,\left(u^{*}_{\lambda,\beta}(y)\right)}.\]
If $u_{\beta}$ is an accumulation function of the family $u^{*}_{\lambda,\beta}$ (when, $\lambda \to 1$), then, we have:
\[e^{u_{\beta}(x)}= e^{-P(\beta\,A)}\sum_{T(y)=x}e^{\beta\,A(y) +u_{\beta}(y)}.\]
\end{proof}

\medskip

{\bf Remark 2:} It is known (Proposition 29 in \cite{CLT}) that
$$ \lim_{\beta \to \infty} (\, P(\beta A) - \beta m(A)\,)= \max_{\mu \in \mathcal{M}} h (\mu).$$

Therefore,
$$ \lim_{\beta \to \infty} \frac{\, P(\beta A)}{ \beta}= m(A),$$
and moreover
$$  \frac{\, P(\beta A)}{ \beta\, (1 - \lambda)}- \frac{ m(A)}{1 - \lambda } \to 0, $$
when $\lambda \to 1$, $\beta \to \infty$ and $\beta (1-\lambda) \to \infty.$

This Remark will be used on the proof of Lemmas 11 and 12.

\medskip

\begin{lemma}
Consider a fixed $\lambda$. Then, when $\beta\to\infty$, the unique possible accumulation point of the family $\frac{1}{\beta}u_{\lambda,\beta}$ is the function $b_{\lambda}$ defined in (\ref{blambda}). Moreover, we get that the unique accumulation point of $\frac{1}{\beta}u^{*}_{\lambda,\beta}$ is the function $b_{\lambda}-\frac{m(A)}{1-\lambda}$.
\end{lemma}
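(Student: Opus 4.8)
The plan is to divide the defining equation by $\beta$ and pass to a max-plus (Laplace-method) limit. First I would set $v_{\beta}:=\frac{1}{\beta}u_{\lambda,\beta}$; taking $\frac{1}{\beta}\log$ on both sides of $e^{u_{\lambda,\beta}(x)}=\sum_{T(y)=x}e^{\beta A(y)+\lambda u_{\lambda,\beta}(y)}$ gives
\[ v_{\beta}(x)=\frac{1}{\beta}\log\Big(\sum_{T(y)=x}e^{\beta\,(A(y)+\lambda v_{\beta}(y))}\Big). \]
Since the family $u_{\lambda,\beta}$ is Lipschitz with constant $\beta\,Lip(A)$, the family $v_{\beta}$ is Lipschitz with constant $Lip(A)$, uniformly in $\beta$. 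For uniform boundedness I would invoke Lemma \ref{caselambda}: the function $u^{*}_{\lambda,\beta}=u_{\lambda,\beta}-\frac{P(\beta A)}{1-\lambda}$ lies between $-\beta\,Lip(A)$ and $\beta\,Lip(A)$, hence $\frac{1}{\beta}u^{*}_{\lambda,\beta}\in[-Lip(A),Lip(A)]$ for every $\beta$; combining this with Remark 2 (which gives $\frac{P(\beta A)}{\beta}\to m(A)$, hence $\frac{P(\beta A)}{\beta(1-\lambda)}\to\frac{m(A)}{1-\lambda}$ for the fixed $\lambda$) shows that $v_{\beta}=\frac{1}{\beta}u^{*}_{\lambda,\beta}+\frac{P(\beta A)}{\beta(1-\lambda)}$ is uniformly bounded as $\beta\to\infty$.

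Next, by Arzel\`a--Ascoli the family $\{v_{\beta}\}$ is relatively compact in $C(S^{1})$. Let $v$ be any accumulation point, say $v_{\beta_{n}}\to v$ uniformly with $\beta_{n}\to\infty$. Using the elementary estimate
\[ 0\le \frac{1}{\beta}\log\Big(\sum_{j=1}^{d}e^{\beta c_{j}}\Big)-\max_{1\le j\le d}c_{j}\le \frac{\log d}{\beta}, \]
applied with $c_{j}=A(\tau_{j}x)+\lambda v_{\beta_{n}}(\tau_{j}x)$, together with the uniform convergence $v_{\beta_{n}}\to v$, I would let $n\to\infty$ in the displayed equation for $v_{\beta_{n}}$ and obtain
\[ v(x)=\max_{T(y)=x}\{A(y)+\lambda v(y)\}. \]
Thus $v$ solves (\ref{blambda}); since that equation has a unique solution, namely the $\lambda$-calibrated subaction, I conclude $v=b_{\lambda}$. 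Hence $b_{\lambda}$ is the only possible accumulation point of $\frac{1}{\beta}u_{\lambda,\beta}$, and by relative compactness $\frac{1}{\beta}u_{\lambda,\beta}\to b_{\lambda}$ uniformly as $\beta\to\infty$.

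Finally, for the starred family, $\frac{1}{\beta}u^{*}_{\lambda,\beta}=v_{\beta}-\frac{P(\beta A)}{\beta(1-\lambda)}$, and since $\frac{P(\beta A)}{\beta(1-\lambda)}\to\frac{m(A)}{1-\lambda}$ (Remark 2, $\lambda$ fixed) while $v_{\beta}\to b_{\lambda}$, we get $\frac{1}{\beta}u^{*}_{\lambda,\beta}\to b_{\lambda}-\frac{m(A)}{1-\lambda}$ uniformly. The only genuinely delicate point is the a priori boundedness of $v_{\beta}$: the natural bounds on $u_{\lambda,\beta}$ and on $P(\beta A)$ both grow linearly in $\beta$, so one must check that the leading terms cancel after dividing by $\beta$ — which is exactly what Lemma \ref{caselambda} together with the asymptotics $P(\beta A)/\beta\to m(A)$ provides; the remaining steps are the routine max-plus/Laplace-method computation and the appeal to uniqueness of $b_{\lambda}$.
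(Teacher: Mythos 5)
Your proposal is correct and follows essentially the same route as the paper: uniform Lipschitz and boundedness of $\frac{1}{\beta}u^{*}_{\lambda,\beta}$ from Lemma \ref{caselambda} plus $\frac{P(\beta A)}{\beta(1-\lambda)}\to\frac{m(A)}{1-\lambda}$, then passing to the limit in $\frac{1}{\beta}\log\bigl(\sum_{T(y)=x}e^{\beta A(y)+\lambda u_{\lambda,\beta}(y)}\bigr)$ and invoking uniqueness of the solution of (\ref{blambda}). You merely make explicit the log-sum-exp estimate $0\le\frac{1}{\beta}\log\bigl(\sum_j e^{\beta c_j}\bigr)-\max_j c_j\le\frac{\log d}{\beta}$ that the paper leaves implicit.
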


\begin{proof}
As $u^{*}_{\lambda,\beta}$ has Lipschitz constant $\beta\,Lip(A)$ and is bounded by $- \beta\,Lip(A)$ and $\beta\,Lip(A)$, the family  $\frac{1}{\beta}u^{*}_{\lambda,\beta}$ is  equicontinuous  and uniformly bounded by $Lip(A)$. From the limit $\frac{P(\beta\,A)}{\beta(1-\lambda)} \to \frac{m(A)}{1-\lambda}$ (as $\beta\to\infty$) we conclude that (for fixed $\lambda$) the family $\frac{1}{\beta}u_{\lambda,\beta}$ is equicontinuous (with a constant $Lip(A)$) and uniformly bounded.
As
\[\frac{1}{\beta}u_{\lambda,\beta}(x) = \frac{1}{\beta}\log\left(e^{u_{\lambda,\beta}(x)}\right)=\frac{1}{\beta}\log\left(\sum_{T(y)=x}e^{\beta\,A(y) +\lambda\,u_{\lambda,\beta}(y)}\right),\]
there is a unique accumulation point $b$ of $\frac{1}{\beta}u_{\lambda,\beta}$ which satisfies
\[b(x) = \sup_{T(y)=x} [A(y) + \lambda\,b(y)],\]
that is $b=b_{\lambda}$.
\end{proof}

In the previous  section we study the limit of $b_{\lambda}-\frac{m(A)}{1-\lambda}$. Now, we are  interested in  the limit of $\frac{1}{\beta}(u_{\lambda,\beta}-\frac{P(\beta\,A)}{1-\lambda})$, when $\beta\to\infty$ and $\lambda \to 1$.

\begin{lemma} When $\beta \to \infty$ and $\lambda \to 1$, $\frac{(1-\lambda)u_{\lambda,\beta}}{\beta}$ converges uniformly to $m(A)$. If $U$ is a limit of some subsequence of the family $\frac{1}{\beta}(u_{\lambda,\beta}-\frac{P(\beta\,A)}{1-\lambda})$, as $\beta \to \infty$ and $\lambda \to 1$, then $U$ is a calibrated subaction.
\end{lemma}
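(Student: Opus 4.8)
The plan is to prove the two assertions in turn, relying on Lemma \ref{caselambda} and Remark 2.

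\emph{Uniform convergence of $\frac{(1-\lambda)u_{\lambda,\beta}}{\beta}$ to $m(A)$.} From Lemma \ref{caselambda} we have $\|u^{*}_{\lambda,\beta}\|_\infty\le\beta\,Lip(A)$, that is $\bigl|u_{\lambda,\beta}(x)-\tfrac{P(\beta\,A)}{1-\lambda}\bigr|\le\beta\,Lip(A)$ for every $x\in S^1$. Multiplying by $\tfrac{1-\lambda}{\beta}$ gives
\[ \Bigl|\,\tfrac{(1-\lambda)u_{\lambda,\beta}(x)}{\beta}-\tfrac{P(\beta\,A)}{\beta}\,\Bigr|\le(1-\lambda)\,Lip(A)\qquad\text{for all }x. \]
As $\beta\to\infty$ and $\lambda\to1$ the right-hand side tends to $0$ uniformly in $x$, while $\tfrac{P(\beta\,A)}{\beta}\to m(A)$ by Remark 2. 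Hence $\tfrac{(1-\lambda)u_{\lambda,\beta}}{\beta}\to m(A)$ uniformly.

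\emph{Any subsequential limit $U$ of $\frac{1}{\beta}u^{*}_{\lambda,\beta}$ is a calibrated subaction.} By Lemma \ref{caselambda}, $u^{*}_{\lambda,\beta}$ is Lipschitz with constant $\beta\,Lip(A)$ and bounded by $\pm\beta\,Lip(A)$; dividing by $\beta$, the family $\tfrac1\beta u^{*}_{\lambda,\beta}$ is equicontinuous with Lipschitz constant $Lip(A)$ and bounded by $Lip(A)$, \emph{uniformly in both $\beta$ and $\lambda$}. Thus, by a standard Arzel\`{a}--Ascoli argument, along any $\beta_n\to\infty$, $\lambda_n\to1$ there is a subsequence (not relabelled) along which $\tfrac1{\beta_n}u^{*}_{\lambda_n,\beta_n}$ converges uniformly to a Lipschitz function $U$; this is what ``a limit of some subsequence'' means. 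I would then take the identity of Lemma \ref{caselambda},
\[ e^{u^{*}_{\lambda,\beta}(x)}=e^{-P(\beta\,A)}\sum_{T(y)=x}e^{\beta\,A(y)+\lambda\,u^{*}_{\lambda,\beta}(y)}, \]
apply $\tfrac1\beta\log$, and rewrite it as
\[ \tfrac1\beta u^{*}_{\lambda,\beta}(x)=\tfrac1\beta\log\Bigl(\sum_{T(y)=x}e^{\beta(A(y)+\lambda\frac1\beta u^{*}_{\lambda,\beta}(y))}\Bigr)-\tfrac{P(\beta\,A)}{\beta}. \]

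To pass to the limit I would use the elementary bound $\max_j c_j\le\tfrac1\beta\log\sum_{j=1}^{d}e^{\beta c_j}\le\max_j c_j+\tfrac{\log d}{\beta}$ with $c_y=A(y)+\lambda_n\tfrac1{\beta_n}u^{*}_{\lambda_n,\beta_n}(y)$. Since $\tfrac1{\beta_n}u^{*}_{\lambda_n,\beta_n}\to U$ uniformly, $\lambda_n\to1$, and $\|\tfrac1{\beta_n}u^{*}_{\lambda_n,\beta_n}\|_\infty\le Lip(A)$, the functions $x\mapsto\max_{T(y)=x}\!\bigl(A(y)+\lambda_n\tfrac1{\beta_n}u^{*}_{\lambda_n,\beta_n}(y)\bigr)$ converge uniformly to $x\mapsto\max_{T(y)=x}\bigl(A(y)+U(y)\bigr)$, and $\tfrac{\log d}{\beta_n}\to0$. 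Combining this with $\tfrac{P(\beta_n A)}{\beta_n}\to m(A)$ (Remark 2) and with $\tfrac1{\beta_n}u^{*}_{\lambda_n,\beta_n}(x)\to U(x)$ on the left-hand side, we obtain
\[ U(x)=\max_{T(y)=x}\{\,A(y)+U(y)-m(A)\,\}\qquad\text{for all }x, \]
which is precisely the definition of a calibrated subaction.

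I do not expect a genuine obstacle here. The only point deserving care is that the equicontinuity constant and the sup-norm bound of $\tfrac1\beta u^{*}_{\lambda,\beta}$ are both equal to $Lip(A)$ \emph{independently of the two parameters}, so that Arzel\`{a}--Ascoli can indeed be applied along the joint limit $\beta\to\infty$, $\lambda\to1$; and that the factor $\lambda$ multiplying $\tfrac1\beta u^{*}_{\lambda,\beta}$ inside the sum is harmless, since $\lambda\to1$ while that family is uniformly bounded. Everything else is the standard zero-temperature (Laplace's method) computation converting the logarithm of a Ruelle-type sum into a maximum, together with Remark 2.
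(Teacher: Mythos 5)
Your proof is correct and follows essentially the same route as the paper: the first claim is obtained identically from the bound $|u^{*}_{\lambda,\beta}|\le\beta\,Lip(A)$ together with $P(\beta A)/\beta\to m(A)$, and the second claim rests on the same inequality $\max_j c_j\le\frac{1}{\beta}\log\sum_j e^{\beta c_j}\le\max_j c_j+\frac{\log d}{\beta}$ that the paper uses in the equivalent form ``each term of the sum is at most $1$ and the maximal term is at least $1/d$.'' The only cosmetic difference is that you obtain the calibration identity in one pass via the two-sided log-sum-exp bound, whereas the paper first derives the subaction inequality branch by branch and then extracts a preimage attaining the maximum infinitely often; the content is the same.
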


\begin{proof}
As
\[\left|\frac{1}{\beta}(u_{\lambda,\beta}-\frac{P(\beta\,A)}{1-\lambda})\right| \leq Lip(A)\]
we get
\[\left|\frac{(1-\lambda)u_{\lambda,\beta}}{\beta}-\frac{P(\beta\,A)}{\beta}\right| \leq (1-\lambda)Lip(A).\]
As $\beta\to \infty$ and $\lambda \to 1$, we obtain
\[\frac{(1-\lambda)u_{\lambda,\beta}}{\beta} \to m(A)\]
uniformly, proving the first claim.

\bigskip

In order to prove the second claim we fix a point $x\in S^1$. Let $$U= \lim_{\lambda_{n} \to 1, \, \beta_n \to \infty }\frac{1}{\beta_n}(u_{\lambda_n,\beta_n}-\frac{P(\beta_n\,A)}{1-\lambda_n}).$$ As
\[\sum_{b_0}e^{\beta\,A(\tau_{b_0}x) + \lambda u_{\lambda,\beta}(\tau_{b_0}x) - u_{\lambda,\beta}(x)}=1,\]
we get, for any $b_0 \in\{1,...,d\}$,
\[0\leq u_{\lambda_n,\beta_n}(x) - \lambda_n u_{\lambda_n,\beta_n}(\tau_{b_0}x) - \beta_n A(\tau_{b_0}x)\]
\[= (u_{\lambda_n,\beta_n}-\frac{P(\beta_n A)}{1-\lambda_n})(x) -(u_{\lambda_n,\beta_n}-\frac{P(\beta_n A)}{1-\lambda_n})(\tau_{b_0}x) +\]
\[(1-\lambda_n)u_{\lambda_n,\beta_n}(\tau_{b_0}x) - \beta_n A(\tau_{b_0}x).\]
Dividing the right side by $\beta_n$, taking $\beta_n \to \infty$ and $\lambda_n \to 1$, we get
\[0 \leq U(x) - U(\tau_{b_0}x) + m(A) -A(\tau_{b_0}x).\]
This shows that $U$ is a subaction.

In order to show that $U$ is calibrated, we fix for each $\lambda$ and $\beta$ a point $a=a_{\lambda,\beta}$ maximizing $\beta  A(\tau_ax) + \lambda u_{\lambda,\beta}(\tau_a x) - u_{\lambda,\beta}(x)$. As
\[\sum_{b_0}e^{\beta  A(\tau_{b_0}x) + \lambda u_{\lambda,\beta}(\tau_{b_0}x) - u_{\lambda,\beta}(x)}=1,\]
we conclude that
\[0\leq u_{\lambda_n,\beta_n}(x) - \lambda_n u_{\lambda_n,\beta_n}(\tau_a x) - \beta_n A(\tau_ax) \leq \log(d).\]
When $\beta_n \to\infty$ and $\lambda_n \to 1$, some $a = a_{\lambda_n,\beta_n}$ will be chosen infinitely many times. When $\beta_n \to\infty$ and $\lambda_n \to 1$, this $a$ will  satisfy
\[U(x) - U(\tau_a x) + m(A) -A(\tau_a x) = 0.\]
\end{proof}

In the last section we  proved that the function $$V(x) = \max_{\mu \in   \mathcal{ M}}     \int S(y,x) \, d \mu(y)   $$
 is the unique limit  of the family $b_{\lambda}-\frac{m(A)}{1-\lambda}$. In the present setting, in order to get a similar result, we will assume  a certain condition: $\beta\to\infty$ faster than $\lambda\to 1$, in the sense that $\beta(1-\lambda) \to \infty$.

First we need a Lemma.

\begin{lemma}
\[b_{\lambda}(x)  \leq \frac{1}{\beta}u_{\lambda,\beta} \leq b_{\lambda}(x)  + \frac{\log(d)}{\beta(1-\lambda)}.\]
\end{lemma}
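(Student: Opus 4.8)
The plan is to establish the two inequalities separately by exploiting the contraction fixed-point equation for $u_{\lambda,\beta}$ together with the variational/supremum characterization of $b_\lambda$, keeping $\lambda$ fixed throughout (the estimate is uniform in $\beta$).

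For the lower bound $b_\lambda(x)\le\frac1\beta u_{\lambda,\beta}(x)$, I would argue pointwise using monotonicity of the operator $\mathfrak S_{\lambda,\beta}$ and a comparison with $\beta b_\lambda$. From
\[
e^{u_{\lambda,\beta}(x)}=\sum_{T(y)=x}e^{\beta A(y)+\lambda u_{\lambda,\beta}(y)}
\ge \max_{T(y)=x}e^{\beta A(y)+\lambda u_{\lambda,\beta}(y)},
\]
one gets $u_{\lambda,\beta}(x)\ge \max_{T(y)=x}\{\beta A(y)+\lambda u_{\lambda,\beta}(y)\}$, i.e. $\frac1\beta u_{\lambda,\beta}(x)\ge\max_{T(y)=x}\{A(y)+\lambda\cdot\frac1\beta u_{\lambda,\beta}(y)\}$. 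So $\frac1\beta u_{\lambda,\beta}$ is a supersolution of the equation $(\ref{blambda})$ defining $b_\lambda$. Since $b_\lambda=\sup_{a}S_{\lambda,A}(x,a)$ is obtained as the \emph{smallest} such supersolution (equivalently, iterating the contraction from below converges up to $b_\lambda$, and the operator is monotone), I would conclude $\frac1\beta u_{\lambda,\beta}\ge b_\lambda$. Concretely, for any infinite word $a=a_0a_1\dots$ one iterates the supersolution inequality $k$ times to obtain
\[
\tfrac1\beta u_{\lambda,\beta}(x)\ \ge\ \sum_{j=0}^{k}\lambda^j A(\tau_{a_j}\cdots\tau_{a_0}x)\ +\ \lambda^{k+1}\tfrac1\beta u_{\lambda,\beta}(\tau_{a_k}\cdots\tau_{a_0}x),
\]
and since $\frac1\beta u_{\lambda,\beta}$ is bounded (by $Lip(A)+\frac{P(\beta A)}{\beta(1-\lambda)}$, which for fixed $\lambda$ stays bounded in $\beta$), the last term goes to $0$ as $k\to\infty$; taking the supremum over $a$ gives $\frac1\beta u_{\lambda,\beta}(x)\ge b_\lambda(x)$.

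For the upper bound $\frac1\beta u_{\lambda,\beta}(x)\le b_\lambda(x)+\frac{\log d}{\beta(1-\lambda)}$, the idea is the standard "log-sum-exp $\le$ max $+\log(\#\text{terms})$" estimate combined with iteration. From the fixed-point equation,
\[
u_{\lambda,\beta}(x)=\log\!\sum_{T(y)=x}e^{\beta A(y)+\lambda u_{\lambda,\beta}(y)}\ \le\ \log d+\max_{T(y)=x}\{\beta A(y)+\lambda u_{\lambda,\beta}(y)\},
\]
so $g:=\frac1\beta u_{\lambda,\beta}$ satisfies $g(x)\le \frac{\log d}{\beta}+\max_{T(y)=x}\{A(y)+\lambda g(y)\}$. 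I want to show $g\le b_\lambda+c$ with $c=\frac{\log d}{\beta(1-\lambda)}$. Set $h:=g-c$; then one checks $h(x)\le \frac{\log d}{\beta}-c+\max_{T(y)=x}\{A(y)+\lambda(h(y)+c)\}= \frac{\log d}{\beta}-c+\lambda c+\max_{T(y)=x}\{A(y)+\lambda h(y)\}$, and $\frac{\log d}{\beta}-c+\lambda c=\frac{\log d}{\beta}-(1-\lambda)c=0$ by the choice of $c$. Hence $h$ is a \emph{subsolution} of $(\ref{blambda})$: $h(x)\le\max_{T(y)=x}\{A(y)+\lambda h(y)\}$. Iterating $k$ times along a word realizing the successive maxima gives, for a suitable $a$,
\[
h(x)\ \le\ \sum_{j=0}^{k}\lambda^j A(\tau_{a_j}\cdots\tau_{a_0}x)\ +\ \lambda^{k+1}h(\tau_{a_k}\cdots\tau_{a_0}x)\ \le\ b_\lambda(x)\ +\ \lambda^{k+1}\,\|h\|_\infty,
\]
using that each finite sum is $\le b_\lambda(x)-\lambda^{k+1}b_\lambda(\cdots)+\lambda^{k+1}b_\lambda(\cdots)$... more simply, each partial Birkhoff sum is bounded above by $b_\lambda(x)-\lambda^{k+1}b_\lambda(\tau_{a_k}\cdots\tau_{a_0}x)$ by the last displayed inequality of the Introduction, so $h(x)\le b_\lambda(x)+\lambda^{k+1}(\|h\|_\infty-\inf b_\lambda)$; letting $k\to\infty$ yields $h(x)\le b_\lambda(x)$, i.e. $g(x)\le b_\lambda(x)+c$.

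The main obstacle I anticipate is purely bookkeeping: making the "iterate and pass to the limit" step rigorous requires uniform boundedness of $\frac1\beta u_{\lambda,\beta}$ (for fixed $\lambda$) so that the tail terms $\lambda^{k+1}(\cdots)$ vanish — this follows from Lemma~\ref{caselambda} (which bounds $u^{*}_{\lambda,\beta}$ by $\pm\beta\,Lip(A)$) together with the bound on $\frac{P(\beta A)}{\beta(1-\lambda)}$, so it is available. A cleaner alternative that avoids infinite iteration altogether is to invoke the monotonicity and contraction properties of the operators: $\frac1\beta u_{\lambda,\beta}$ is a fixed point of $w\mapsto \frac1\beta\mathfrak S_{\lambda,\beta}(\beta w)$, which lies between the operator $w\mapsto\max_{T(y)=x}\{A(y)+\lambda w(y)\}$ (fixed point $b_\lambda$) and that operator plus $\frac{\log d}{\beta}$ (fixed point $b_\lambda+\frac{\log d}{\beta(1-\lambda)}$); since all three operators are monotone contractions, comparison of fixed points gives the sandwich directly.
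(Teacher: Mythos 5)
Your proof is correct and follows essentially the same route as the paper: bound the sum in the fixed-point equation below by a single term and above by $d$ times the maximum, iterate, use the uniform boundedness of $u^{*}_{\lambda,\beta}$ (Lemma \ref{caselambda}) to kill the tail, and sum the geometric series $\sum_k \lambda^k \frac{\log d}{\beta}=\frac{\log d}{\beta(1-\lambda)}$. Your reorganization of the upper bound (subtracting the constant $c$ to exhibit a subsolution) and the closing remark about comparing fixed points of monotone contractions are only cosmetic variants of the paper's direct accumulation of the $\frac{\log d}{\beta}$ terms.
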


\begin{proof}
From Lemma \ref{caselambda} the function $u^{*}_{\lambda,\beta}=	u_{\lambda,\beta}-\frac{P(\beta A)}{1-\lambda}$ satisfies
	\[e^{u^{*}_{\lambda,\beta}(x)} = e^{-P(\beta\,A)}\sum_{T(y)=x}e^{\beta\,A(y) +\lambda\,u^{*}_{\lambda,\beta}(y)}.\]
Then, for any $a=(a_0a_1a_2...) \in\{1,...,d\}^{\mathbb{N}}$, 	
\begin{align*}
\frac{1}{\beta}{u^{*}_{\lambda,\beta}(x)} &\geq  \,A(\tau_{a_0} x) -\frac{1}{\beta}P(\beta\,A) +\lambda\,\frac{1}{\beta}u^{*}_{\lambda,\beta}(\tau_{a_0}x)\\
&\geq \,A(\tau_{a_0}x)+\lambda A(\tau_{a_1}\tau_{a_0}(x)) -\frac{1+\lambda}{\beta}P(\beta\,A) +\lambda^2\,\frac{1}{\beta}u^{*}_{\lambda,\beta}(\tau_{a_1}\tau_{a_0}x) .
\end{align*}

By induction, as $u^{*}_{\lambda,\beta}$ is uniformly bounded and $\lambda<1$, we obtain
	\[\frac{1}{\beta}{u^{*}_{\lambda,\beta}(x)} \geq S_{\lambda,A}(x,a)-\frac{P(\beta A)}{(1-\lambda)\beta}.\]
	Taking the supremum in $a$ we get
	\[\frac{1}{\beta}{u^{*}_{\lambda,\beta}(x)}\geq b_{\lambda}(x) - \frac{P(\beta A)}{(1-\lambda)\beta},\]
	that is, $\frac{1}{\beta}u_{\lambda,\beta} \geq b_{\lambda}(x)$.
	
On the other hand
\begin{align*}
\frac{1}{\beta}{u^{*}_{\lambda,\beta}(x)} &\leq \frac{1}{\beta}\log(d) +\sup_{a_0}[\,A(\tau_{a_0}x) -\frac{1}{\beta}P(\beta\,A) +\lambda\,\frac{1}{\beta}u^{*}_{\lambda,\beta}(\tau_{a_0}x)]\\
&\leq \frac{1+\lambda}{\beta}\log(d) +\sup_{a_0,a_1}[\,A(\tau_{a_0}x)+\lambda A(\tau_{a_1}\tau_{a_0}(x))\\ &\hspace{1cm} -\frac{1+\lambda}{\beta}P(\beta\,A) +\lambda^2\,\frac{1}{\beta}u^{*}_{\lambda,\beta}(\tau_{a_1}\tau_{a_0}x)].
\end{align*}
Now, we get
\begin{align*}
\frac{1}{\beta}{u^{*}_{\lambda,\beta}(x)} &\leq \frac{\log(d)}{(1-\lambda)\beta} + \sup_{a} S_{\lambda,A}(x,a)-\frac{P(\beta A)}{(1-\lambda)\beta}\\
&= \frac{\log(d)}{(1-\lambda)\beta} +b_{\lambda}(x) - \frac{P(\beta A)}{(1-\lambda)\beta},
\end{align*}
that is, $\frac{1}{\beta}u_{\lambda,\beta} \leq b_{\lambda}(x)  + \frac{\log(d)}{\beta(1-\lambda)}$.
\end{proof}

\noindent
\textbf{Proof of Theorem \ref{goal2}:}
It follows from the above lemma that
\begin{align*} b_{\lambda}(x) - \frac{m(A)}{1-\lambda} &\leq    \frac{1}{\beta}(u_{\lambda,\beta} -\frac{P(\beta A)}{1-\lambda}) + ( \frac{P(\beta A)}{\beta(1-\lambda)}- \frac{m(A)}{1-\lambda} ) \\
&\leq b_{\lambda}(x) - \frac{m(A)}{1-\lambda} + \frac{\log(d)}{\beta(1-\lambda)}.
\end{align*}
Assuming that $\lambda\to 1$, $\beta \to \infty $, $\beta(1-\lambda)\to\infty$, and applying Theorem \ref{goal} we obtain
that $\frac{1}{\beta}(u_{\lambda,\beta} -\frac{P(\beta A)}{1-\lambda}) + ( \frac{P(\beta A)}{\beta(1-\lambda)}- \frac{m(A)}{1-\lambda} )$ converges uniformly to $V$.  As $P(\beta A)= \beta m(A) + \epsilon_\beta$, where $\epsilon_\beta\geq 0$ decreases (see \cite{Conze-Guivarch}), we get  that $\frac{P(\beta A)}{\beta(1-\lambda)}- \frac{m(A)}{1-\lambda}= \frac{\epsilon_\beta}{\beta(1-\lambda)}$ converges to zero. This concludes the proof.
\qed

	\bigskip
	
We finish this section introducing an example (on the symbolic space) where it is studied the limit of $\frac{1}{\beta}\log(\phi_{\beta})$ in a particular case. This limit is not $V$ and this shows that some joint control of $\beta$ and $\lambda$ is really necessary.

\begin{example}

We consider $X=\{0,1\}^{\mathbb{N}}$ with the shift map and a potential $A$ depending on two coordinates. More precisely we suppose $A(1,1)=A(2,2)=0$, $A(1,2)=-5$ and $A(2,1)=-3$.




Consider the matrix $$L_{\beta}=\left(\begin{array}{cc} e^{\beta A(1,1)} & e^{\beta A(1,2)}\\ e^{\beta A(2,1)} & e^{\beta A(2,2)}\end{array}\right) =  \left(\begin{array}{cc} 1 & e^{-5\beta}\\ e^{-3\beta } & 1\end{array}\right)$$
that defines the Ruelle Operator associated to $\beta A$. We note that the main eigenvalue is given by
\[\alpha_{\beta}=e^{P(\beta A)}=1+e^{-4\beta}.\]
Furthermore, the eigenfunction $\phi_\beta$ associated to the Ruelle operator of $\beta A$ depends on the first coordinate and satisfies
\[\phi_\beta(1)=1+e^{\beta}\,\,\, and\,\,\, \phi_\beta(2)=1+e^{-\beta}\]
(it  can be directly checked that $\phi_\beta L_\beta= \alpha_\beta \phi_\beta$ ). Any multiple of $\phi_\beta$ is also an eigenfunction.
When $\beta\to +\infty$, we get
$U= \lim_{\beta\to\infty}\frac{1}{\beta}\log(\phi_\beta)$ which satisfies
\[U(1) = 1, \,\,\, U(2) = 0.\]
Now we will prove that $V\neq U$. Indeed
any maximizing measure for $A$ is of the form $\mu_r:=r\delta_{1^{\infty}} + (1-r)\delta_{2^\infty}$.
Consider the functions
\[S(y,x) = \lim_{\varepsilon \to 0} \sup_n\{ S_n(A)(y'): d(y',y)<\varepsilon, \, T^{n}(y')=x\}\]
and
\[V(x) = \sup_{\mu_r}\int S(y,x) \, d\mu_r(y).\]
As $A$ depends only on two coordinates we conclude that $S(y,x)$ depends only on the first coordinate of $x$. From the analysis of $S(y,x)$ we get
\[S(2^{\infty}, 1) = -3,\,\, S(1^{\infty},1) = 0,\,\, S(1^{\infty},2) = -5,\,\, S(2^{\infty},2) = 0 \]
(for instance, when considered $S(2^{\infty}, 1)$ the point $y'$ in (\ref{forg}) will contain  the word $21$, and each word $21$ will decrease the value of $S_n(A)(y')).$

Then,
\[\sup_{\mu_r} \int S(y,1) \, d\mu_r(y) = \sup_r [r\cdot 0 +(1-r)(-3)]  = 0\]
and
\[\sup_{\mu_r} \int S(y,2) \, d\mu_r(y) = \sup_r [r (-5)+(1-r)(0)]  = 0.\]
This shows that $V \neq U.$
\end{example}

\bigskip

Renato Iturriaga

renato@cimat.mx

CIMAT- Guanajuato - Mexico

\medskip

Artur O. Lopes

arturoscar.lopes@gmail.com

IME - UFRGS - Porto Alegre - Brasil

\medskip

Jairo K. Mengue

jairokras@gmail.com

IME - UFRGS - Porto Alegre - Brasil

\medskip


\begin{thebibliography}{99}


\bibitem{BKRU}
R. Bamon, J. Kiwi, J. Rivera-Letelier and R. Urzua, On the topology of solenoidal attractors of the cylinder, \emph{Ann. Inst. H. Poincare´ Anal. Non Lineaire}. 23, no. 2, 209-236 (2006)

\bibitem{BLL} A. Baraviera, R. Leplaideur and A. O. Lopes, \emph{ Ergodic Optimization, zero temperature and the Max-Plus algebra}, $29^{\text{o}}$ Coloquio Brasileiro de Matematica,  IMPA, Rio de Janeiro, (2013)


\bibitem{BCLMS} A. T. Baraviera, L. M. Cioletti, A. Lopes, J. Mohr and R. R. Souza,
On the general one-dimensional XY Model: positive and zero temperature, selection and non-selection, \emph{Reviews in Math. Physics.} Vol. 23, N. 10, pp 1063-1113 (2011).

\bibitem{Bou}
T. Bousch,
La condition de Walters, \emph{ Ann. Sci. ENS}, Serie 4, Vol 34, n.2  287--311 (2001)



\bibitem{Bre} J. Bremont, Gibbs measures at temperature zero, \emph{Nonlinearity}, 16, 419--426 (2003)

\bibitem{CLT}
G. Contreras, A. O. Lopes and Ph. Thieullen,  Lyapunov minimizing
measures for expanding maps of the circle, \emph{Ergodic Theory
and Dynamical Systems}, \textbf{21}, 1379--1409 (2001)


\bibitem{CLT1}
G. Contreras, A. O. Lopes and Ph. Thieullen, Maximizing
measures for expanding transformations, arXiv:1307.0533 (2013).

\bibitem{CLO} G. Contreras, A. O. Lopes and E. Oliveira,
Ergodic Transport Theory, periodic maximizing probabilities and the twist condition,
"\emph{Modeling, Optimization, Dynamics and Bioeconomy I}", Springer Proceedings in Mathematics and Statistics, Volume 73, Edit. David Zilberman and Alberto Pinto, 183-219 (2014)

\bibitem{Conze-Guivarch}
 J.-P. Conze \& Y. Guivarch,
 {\em Croissance des sommes ergodiques},
 manuscript, circa 1993.

 \bibitem{DFIZ} A. Davini, A. Fathi, R. Iturriaga and M. Zavidovique,  Convergence of the solutions of the discounted Hamilton-Jacobi equation: convergence of the discounted solutions,  \emph{ Invent. Math.} 206 (2016), no. 1, 29--55.


 \bibitem{DFIZ1} A. Davini, A. Fathi, R. Iturriaga and M. Zavidovique,
Convergence of the solutions of the discounted equation: the discrete case, \emph{ Math. Z.} 284 (2016), no. 3-4, 1021--1034.


\bibitem{GL2}
E. Garibaldi and  A. O. Lopes, On the Aubry-Mather Theory for  Symbolic Dynamics,
\emph{Erg. Theo. and Dyn Systems,} Vol 28, Issue 3, 791-815 (2008)

\bibitem{Ga} E. Garibaldi, \emph{Ergodic Optimization
in the Expanding Case}, Springer Verlag (2017)

\bibitem{GT}
E. Garibaldi and Ph. Thieullen, Description of Some Ground States by
Puiseux Techniques, \emph{Journal of Statistical Physics}, Vol. 146, Issue
1, 125-180  (2012).

\bibitem{Jenkinson}
O. Jenkinson, Ergodic optimization, \emph{Discrete and Continuous Dynamical Systems, Series A}
\textbf{15} (2006), 197-224.




\bibitem{LO} A. O. Lopes and E. R.  Oliveira,
On the thin boundary of the fat attractor, to appear in "\emph{Modeling, Dynamics, Optimization and Bioeconomics III}" Editors: Alberto Pinto and David Zilberman, Springer Proceedings in Mathematics and Statistics, Springer Verlag (2018)


\bibitem{LMMS} A. O. Lopes, J. K. Mengue, J. Mohr and  R. R. Souza,
Entropy and Variational Principle for  one-dimensional Lattice Systems with a general a-priori probability: positive and zero temperature,  \emph{ Erg. Theory and Dyn Systems}, 35 (6), 1925-1961 (2015)



\bibitem{LoT}  A. O.  Lopes,  \emph{ Thermodynamic Formalism, Maximizing Probabilities and Large Deviations},  Work in progress. Lecture Notes - Dynamique en Cornouaille (2012)


\bibitem{Mane}
R. Ma\~n\'e,   Generic properties and problems of minimizing
measures of Lagrangian systems, \emph{Nonlinearity}, Vol 9, 273-310, 1996.


\bibitem{PP}
W. Parry and M.  Pollicott, Zeta functions and the periodic
orbit structure of hyperbolic dynamics, \emph{Ast\'erisque}
Vol {187-188} 1990

\bibitem{Ro} C. Robinson, \emph{Dynamical Systems}, CRC Press, (1995)


\bibitem{T1}
M. Tsujii, Fat solenoidal attractors, \emph{Nonlinearity} 14 (2001) 1011-1027.




\bibitem{Vi1}
C. Villani, \emph{Topics in optimal transportation}, AMS, Providence, (2003).

\bibitem{Vi2}
C. Villani, \emph{Optimal transport: old and new}, Springer-Verlag, Berlin, (2009).

\end{thebibliography}
\end{document}